\documentclass{article}
\usepackage{amsmath}
\usepackage{empheq}
\usepackage{mathtools}
\usepackage{amssymb}
\usepackage{mathrsfs}
\usepackage{stmaryrd}
\usepackage{bm}
\usepackage{amsfonts}
\usepackage{physics}
\usepackage{cite}
\usepackage{url}
\usepackage{hyperref}
\usepackage{enumitem}
\usepackage{amsthm}
\usepackage{thmtools}
\usepackage{nicematrix}
\usepackage{geometry}
\usepackage{spalign}
\usepackage{booktabs}
\usepackage{subcaption}
\usepackage{threeparttable}
\usepackage{multirow}
\usepackage{makecell}

\theoremstyle{plain}
\newtheorem{theorem}{Theorem}[section]
\newtheorem{lemma}[theorem]{Lemma}
\newtheorem{corollary}[theorem]{Corollary}

\theoremstyle{definition}

\newcommand\set[2]{{\left\{ {#1} \; \colon \; {#2} \right\}}}
\newcommand\setnd[1]{{\left\{ {#1} \right\}}}
\newcommand\floor[1]{{\left\lfloor {#1} \right\rfloor}}
\newcommand\ceil[1]{{\left\lceil {#1} \right\rceil}}
\newcommand\card[1]{{\left|{#1}\right|}}

\newcommand{\ZZ}{\mathbb{Z}}
\newcommand\CC{\mathbb{C}} 
 
\newcommand{\F}{\mathbb{F}}
\newcommand{\defeq}{\triangleq}
\newcommand{\dual}[1]{#1^\bot}
\newcommand{\factorial}[1]{#1!}

\DeclareMathOperator{\wt}{\mathrm{wt}}
\DeclareMathOperator{\cay}{\mathrm{Cay}}

\makeatletter
\let\oldtheequation\theequation
\renewcommand\tagform@[1]{\maketag@@@{\ignorespaces#1\unskip\@@italiccorr}}
\renewcommand\theequation{(\oldtheequation)} \makeatother

\title{On the Smallest Eigenvalues and Quantum Chromatic Numbers of
Hamming Graphs and Generalizations}
\author{
Yu Ning
\thanks{Yu Ning is with Hefei National Laboratory, Hefei 230088, China
(e-mail:sirning@mail.ustc.edu.cn)}
\and
Jack H. Koolen \thanks{Jack H. Koolen is with the School of Mathematical 
Sciences, University of Science and Technology of China, Hefei 230026, Anhui, 
China (e-mail:koolen@ustc.edu.cn)}
\and Xiande Zhang \thanks{Xiande Zhang is with the School of Mathematical
Sciences, University of Science and Technology of China, Hefei 230026, Anhui,
China, and also with Hefei National Laboratory, University of Science and
Technology of China, Hefei 230088, China (drzhangx@ustc.edu.cn)
\emph{(Corresponding author: Xiande Zhang)}}%
}

\date{}
\begin{document}
\maketitle
\begin{abstract} The smallest 
eigenvalues of (distance‑\(j\)) Hamming graphs with distance parameter \(j\) at 
least half the length were completely determined by Brouwer \emph{et al.}\ (2018). In 
the present work, we address the complementary regime, namely distances \(j\) 
strictly less than half the length, and derive asymptotic lower bounds on the 
smallest eigenvalue of 
binary Hamming graphs. For certain natural generalizations, specifically Cayley 
graphs defined over quaternary vector spaces, we asymptotically determine the 
smallest eigenvalue as well. As an application, we obtain lower bounds on the 
quantum chromatic number of these graphs. In particular, for the aforementioned 
Cayley graphs over quaternary vectors, our lower bounds for the quantum 
chromatic number coincide with known upper bounds.

\vspace{1em}
\noindent\textbf{Keywords:} Hamming graph, Cayley graph, smallest eigenvalue,
quantum chromatic number
\end{abstract}

\section{Introduction}
\label{sec:introduction}
In this paper, we study the smallest eigenvalues of the adjacency matrices  of
two important  families  of  graphs, all belonging to Cayley graphs over vector
spaces.  The smallest eigenvalues of Cayley graphs over vector spaces
\cite{Feng,SDU}, especially orthogonality graphs (also known as Hadamard graphs)
\cite{SDU,ExactHadamardGraphs}, have been widely studied recently and used to
determine the chromatic number and quantum chromatic number of the corresponding
graphs,  illustrating a quantum advantage in graph coloring games
\cite{OnTheQuantumChromaticNumber,QuantumProtocol}.

\subsection{Binary Hamming Graphs}
Binary Hamming graphs are an important class of Cayley graphs on $ \ZZ_2^n$,
which are generated by a set of vectors with a fixed weight. They constitute the
most well-known  examples  of   P-polynomial  association  schemes \cite[Section
2.7]{AC}, and are important objects in graph theory \cite{DRG,DRGBook} 
and coding theory \cite{Color}. Let $H(n,j)$ denote the binary 
\emph{distance-$j$ Hamming graph},
whose vertex set is $\ZZ_2^n$, and two vectors in $ \ZZ_2^n$ are adjacent if and
only if their Hamming distance is $j$. The graph $H(n,j)$ is just a Cayley graph
on $ \ZZ_2^n$ generated by vectors of weight $j$. When $n$ is doubly even and
$j=n/2$, $H(n,j)$ is isomorphic to an orthogonality graph
\cite{ColoringOrthogonalityGraph}. The smallest eigenvalues of $H(n,j)$ are used
to study the max-cut \cite{Alon}, max-$k$-cut and chromatic number \cite{vanDam} of
certain graphs in the Hamming scheme. When $j$ is odd, $H(n,j)$ is bipartite 
and the smallest eigenvalue is $K_j(n) = -\binom{n}{j}$, where $K_j(x)$ is the 
binary Krawtchouk polynomial. When $j$ is even, it was 
conjectured \cite{vanDam} that $K_j(1) = \binom{n-1}{j} -
\binom{n-1}{j-1}$ is the smallest eigenvalue of $H(n,j)$ for even $j \ge
\frac{n+1}{2}$. This conjecture has been studied by Alon and Sudakov
\cite{Alon}, Dumer and Kapralova \cite{Dumer}, and was finally proved
 by Brouwer \emph{et al.\ }\cite{SmallestEigenvalue}
in a more general setting. However, to our knowledge, little is known about the
smallest eigenvalue of $H(n,j)$ for even $j < n/2$.

Denote $\lambda_{\min}(G)$  the smallest eigenvalue of a graph $G$. By easy 
computation, we know that $\lambda_{\min}(H(n,2))=-\lfloor \frac{n}{2}\rfloor$ 
and  $\lambda_{\min}(H(n,4))=-\frac{n^2}{4}+\Theta(n)$.\footnote{$f = O(g)$ if 
$f(n) \le c_1g(n)+c_2$ for some fixed constant $c_1,c_2 > 0$ and all possible 
$n$; $f = \Omega(g)$ if $g = O(f)$; $f = \Theta(g)$ if $f = O(g)$ and $f = 
\Omega(g)$; $f = o(g)$ if $f(n)/g(n)$ tends to $0$ as $n$ tends to $\infty$}
In this paper, we  provide asymptotic lower bounds on the smallest eigenvalue of
$H(n,j)$ for even $j < n/2$. In particular, we  show that (see
\autoref{corollary:lb_Hamming}) 
\begin{equation}\label{eq:min}
|\lambda_{\min}(H(n,j))| = 
\begin{cases}
O(n^{j/2}), & j = O(1), \\
O\qty(\left(6.563\sqrt{(1-\alpha)/\alpha}\right)^{\alpha n}), & j = \alpha n, ~0 < \alpha \le 0.17,
\end{cases}
\end{equation}
where $n$ is large enough.

\subsection{Cayley Graphs over  $\ZZ_4^n$}
The smallest eigenvalues of several classes of Cayley graphs over $\ZZ_3^n$,
which can be embedded into some orthogonality graphs, have been studied in
\cite{OUR}. Motivated by these results, the second family of graphs we consider
is the Cayley graph over $\ZZ_4^n$, denoted by $\cay(\ZZ_4^n,(r,s,r,s))$ with $n
= 2(r+s)$, which is generated by vectors of type $(r,s,r,s)$, that is, vectors
with the occurrences of symbols $0,2$ being both $r$ and the occurrences of
$1,3$ being both $s$. The edge-union of $\cay(\ZZ_4^n,(r,s,r,s))$ for all $r \in
[0,n/2]$ constitutes an orthogonality graph (see
\autoref{subsec:chiq_related_works}). There is also a graph homomorphism from
$\cay(\ZZ_4^n,(r,s,r,s))$ to $H(n,2s)$ through the quotient map $\ZZ_4^n \to
\ZZ_2^n$, and the eigenvalues of $H(n,2s)$ are related to the eigenvalues of
$\cay(\ZZ_4^n,(r,s,r,s))$. 

With an analysis of the spectrum of
$\cay(\ZZ_4^n,(r,s,r,s))$ and by the lower bounds of $\lambda_{\min}(H(n,2s))$, we
determine $\lambda_{\min}(\cay(\ZZ_4^n,(r,s,r,s))) $ for certain parameters $r$ and $s$. In fact, we show that
\begin{equation}\label{eq:rs}
\lambda_{\min}(\cay(\ZZ_4^n,(r,s,r,s))) = -\frac{\binom{n}{r,s,r,s}}{n-1}    
\end{equation}
for fixed $s\geq 2$ or $s=\frac{1}{2}\alpha n$ with $0 < \alpha \le 0.17$, where $r \ge s$ is even and large enough (see
\autoref{theorem:smallest_eigenvalue_4}).

\subsection{Quantum Chromatic Numbers}
A $c$-coloring of a graph $G$ with a vertex set $V$, is a map $V \to [c]
\defeq \setnd{1,2,\dots,c}$ such that adjacent vertices receive distinct colors
in $[c]$. The \emph{chromatic number} $\chi(G)$ of $G$ is the smallest $c$ such
that $G$ admits a $c$-coloring, which has been thoroughly studied \cite[Chapter
5]{GraphTheory}. The \emph{quantum chromatic number} 
\cite{OnTheQuantumChromaticNumber,QuantumProtocol,SpectralLB} of $G$, denoted 
by $\chi_q(G)$, is a
generalization of the chromatic number in quantum information theory
\cite{QuantumVSClassical,CostSimulating}, which is defined to be the  smallest 
$c$ such that $G$ admits a quantum $c$-coloring. 
A \emph{quantum $c$-coloring} of $G$  is a collection of orthogonal projections $P_{v,i} : \CC^d \to
\CC^d$, $v \in V$, and $i \in [c]$, satisfying that each vertex requires $\sum_{i
\in [c]} P_{v,i}=\mathbb{I}_d$, and adjacent vertices $v,w$ require
$P_{v,i}P_{w,i} = 0$ for all $i \in [c]$
\cite{OnTheQuantumChromaticNumber,SpectralLB}.
The notion of graph coloring can be restated in the framework of non-local 
games, and classical players may win the game with $\chi(G)$ colors, while 
quantum players may win with $\chi_q(G)$ colors
\cite{OnTheQuantumChromaticNumber,QuantumProtocol,SpectralLB}. Graphs satisfying
 $\chi_q(G) < \chi(G)$ illustrate a quantum advantage in graph coloring
games.

Many graphs have witnessed a gap between $\chi_q(G)$ and $\chi(G)$, especially
 orthogonality graphs $H(n,n/2)$ for doubly even $n$
\cite{OnTheQuantumChromaticNumber}, and general Hamming graphs \cite{SDU,OUR,Feng}.
The smallest eigenvalue $\lambda_{\min}(G)$ plays an important role in the
spectral lower bound on quantum chromatic numbers $\chi_q(G) \ge 1 -
\frac{d}{\lambda_{\min}(G)}$ \cite{SpectralLB}  for a regular graph with degree
$d$.

Recently, the quantum chromatic number of $H(n,j)$ for general $j$ was
studied in \cite{Feng,SDU}. When $j$ is odd,  $H(n,j)$ is a bipartite graph and thus of quantum chromatic number $2$
\cite{OnTheQuantumChromaticNumber}. For even $j$,
only the following upper bounds on $\chi_q(H(n,j))$ are known \cite{Feng,SDU,OUR}:
\begin{equation}
	\label{eqn:upper_bound_chi_q_Hamming}
\chi_q(H(n,j)) \le
\begin{cases}
2j, & j \ge n/2, \\
2\binom{n}{2}, & n/2-\sqrt{n}/2 < j < n/2, \\	
2^{h\qty(\frac{1}{2}-\sqrt{\alpha(1-\alpha)})n + o(n)},
& j = \alpha n, 0 < \alpha < 1/2,\\
n+1, &j=2; n=2^{t}, \text{or } n \equiv 3 \pmod{4} \text{ is a prime power}, \\
\end{cases}\end{equation}
where $h(x) = -x\log_2(x) - (1-x)\log_2(1-x)$ is the binary entropy function.
For lower bounds, $\chi_q(H(n,j)) \ge \frac{2j}{2j-n}$, when $j> n/2$,
\cite{SDU}. 

Using the lower bound in \autoref{eq:min}, we show that
for fixed even $j$ and $n$ large enough (see \autoref{corollary:lb_chiq_Hamming})
$$
\chi_q(H(n,j)) = \Omega(n^{j/2}).
$$
When $j < n/2$ and $j = \Theta(n)$, an
exponential lower bound on $\chi_q(H(n,j))$ is also provided (see
\autoref{corollary:lb_chiq_Hamming}).

Using the value in \autoref{eq:rs}, the lower bounds on
$\chi_q(\cay(\ZZ_4^n,(r,s,r,s)))$ are also derived, which match the known upper
bounds. In particular, we have $\chi_q(\cay(\ZZ_4^n,(r,s,r,s))) = n$ for fixed
$s\geq 2$ or $s=\frac{1}{2}\alpha n$ with $0 < \alpha \le 0.17$, where $r \ge s$
is even and large enough (see \autoref{corollary:chi4}).
 
In \autoref{tab:results} we list known results on the quantum chromatic number
of graphs in the literature.

\subsection{Organizations}
This paper is organized as follows. \autoref{sec:preliminary} introduces basic
notions, necessary results, and approaches. In \autoref{sec:lb_Hamming}, we show
an asymptotic lower bound on $\lambda_{\min}(H(n,j))$ for even $j = O(1)$ or $j
= \Theta(n)$ and $n$ large enough. \autoref{sec:Z4} is devoted to the smallest
eigenvalue of $\cay(\ZZ_4^n,(r,s,r,s))$. In \autoref{sec:qchi}, we apply the
spectral lower bound on the quantum chromatic number with the results on the smallest eigenvalues obtained in \autoref{sec:lb_Hamming} and \autoref{sec:Z4} to derive lower bounds on
$\chi_q(H(n,j))$ and $\chi_q(\cay(\ZZ_4^n,(r,s,r,s)))$, respectively. Finally, in
\autoref{sec:conclusion}, we conclude this paper and discuss some open problems.

\begin{table}[t]
\centering
\begin{threeparttable}
\caption{Known quantum chromatic numbers of graphs}
\label{tab:results}
\begin{tabular}{l|c|c}
\toprule
Graphs $G$ & $\chi_q(G)$ & References \\
\midrule
$K_n$ (the complete graph on $n$ vertices)  & $n$ & \cite{OnTheQuantumChromaticNumber} \\
graphs with $\chi(G) = k$, $k=2,3$ & $k$ & \cite{OnTheQuantumChromaticNumber} \\
$H(4t,2t) \cong \cay(\ZZ_2^{4t},(2t,2t))$ & $4t$ & \cite{ExactHadamardGraphs,Feng} \\
$\cay(\ZZ_2^{4t-1},(2t-1,2t))$ & $4t$ & \cite{Feng} \\
$\cay(\ZZ_p^{lp},(l,\dots,l))$, $p$ is an integer, $l$ large enough, $l(p-1)$
even & $lp$ & \cite{SDU} \\
$\cay(\F_q^{q^l},(q^{l-1},q^{l-1},\dots,q^{l-1}))$, $q$ is a prime power& $q^l$ & \cite{SDU} \\
\midrule
$\cay(\ZZ_3^{3l},(l,l,l))$, or $\cay(\ZZ_3^{3l-1},(l-1,l,l))$ & $3l$ & \cite{OUR} \\
\midrule
$H(n,2)$, $n \equiv 3 \pmod{4}$ is a prime power, or $n = 2^{t}$ with $t\geq 3$ & $n+1$ & \cite{OUR}
\\ 
\midrule
$H(n,j) \cong \cay(\ZZ_2^n,(n-j,j))$, $j$ even fixed, $n$ large enough &
$\Omega(n^{j/2})$ & \autoref{corollary:lb_chiq_Hamming} \\
$H(n,j) \cong \cay(\ZZ_2^n,(n-j,j))$, $j$ even, $0 < \alpha = \frac{j}{n} \le
0.17$ & $\Omega(\exp(n))$ & \autoref{corollary:lb_chiq_Hamming} \\
\midrule
$\cay(\ZZ_4^{2(r+s)},(r,s,r,s))$, $ s/n \leq 0.085$, $s\geq 2$, $r$ is even and large 
& $2(r+s)$ & \autoref{corollary:chi4} \\
\bottomrule
\end{tabular}
\begin{tablenotes}
\item[1] In the table, $\cay(\ZZ_2^{4t},(2t,2t))$ is the Cayley graph over
$\ZZ_2^{4t}$ with a generating set consisting of vectors of type $(2t,2t)$ (see
\autoref{subsec:notations}). Similar for other Cayley graphs.
\end{tablenotes}
\end{threeparttable}
\end{table}

\section{Preliminary}
\label{sec:preliminary}
For integers $m,n \in \ZZ$
with $m \le n$, let $[m,n] \defeq \setnd{m,m+1,\dots,n}$, and for $n \ge 1$, let
$[n] \defeq [1,n]$. For an integer $p$, let $\ZZ_p$ denote the
ring of integers modulo $p$.  For any real $0\leq x\leq 1$, let $h(x) \defeq -x\log_2(x) - (1-x)\log_2(1-x)$ be the binary entropy function.
\subsection{Cayley Graphs and Eigenvalues}
\label{subsec:notations}

For an abelian group $A$  and an inversion-closed subset $S \subset A$
that does not contain $0$, the Cayley graph over $A$ generated by $S$ is denoted as
$\cay(A,S)$, whose vertex set is $A$ and two vertices $v,w \in A$ are adjacent
if and only if $v-w \in S$.  For $a \in A$, we denote by $\chi_a : A \to \CC^*$ the character of
$A$ corresponding to $a$ \cite[Chapter 5]{FiniteFields}. Let
\begin{equation}
	\label{eqn:eigenvalue_Cayley_graph}
E_a  \defeq
(\chi_a(b))_{b \in A} \in \CC^A \text{ and }\lambda(a) \defeq \sum_{s \in S}
\chi_a(s).
\end{equation}
Then $\lambda(a), a \in A$ are all eigenvalues of $\cay(A,S)$ with the
corresponding  eigenvector $E_a$  \cite[Chapter 1]{SpectraCayleyGraphs}.

Now consider the Cayley graphs over $\ZZ_p^n$. For a vector $\mathbf{v} =
(v_1,\dots,v_n)\in \ZZ_p^n$  and $k \in \ZZ_p$, the \emph{$k$-th Hamming weight}
$\wt_k(\mathbf{v})$ of $\mathbf{v}$ is the occurrence of $k$ in $\mathbf{v}$,
i.e., $\wt_k(\mathbf{v}) \defeq \card{\set{i \in [n]}{v_i = k}}$. The
\emph{type} of $\mathbf{v}$ is denoted as $\mathbf{t}(\mathbf{v}) \defeq
(\wt_0(\mathbf{v}),\wt_1(\mathbf{v}),\dots,\wt_{p-1}(\mathbf{v}))$, which is a
non-negative ordered partition of $n$.  For a type $\mathbf{t} =
(t_0,\dots,t_{p-1})$, let $\ZZ_p(\mathbf{t})$ denote the set of vectors of type
$\mathbf{t}$ in $\ZZ_p^n$. Then $\card{\ZZ_p(\mathbf{t})} =
\binom{n}{\mathbf{t}} \triangleq \binom{n}{t_0,\dots,t_{p-1}} =
\frac{\factorial{n}}{\factorial{(t_0)}\factorial{(t_1)}\dots
\factorial{(t_{p-1})}}$. If $\ZZ_p(\mathbf{t})$ is inversion-closed and does not
contain the all-zero vector $\mathbf{0}$, we write $\cay(\ZZ_p^n,\mathbf{t}) \defeq
\cay(\ZZ_p^n, \ZZ_p(\mathbf{t}))$ for the Cayley graph over $\ZZ_p^n$ generated
by the set of vectors of type $\mathbf{t}$.

To compute the eigenvalues of the Cayley graph $\cay(\ZZ_p^n,\mathbf{t})$, by \autoref{eqn:eigenvalue_Cayley_graph}, we need all characters of
$\ZZ_p^n$. It is known \cite[Chapter 5]{FiniteFields} that all the characters of $\ZZ_p^n$ are

\begin{equation}
\label{eqn:character_p}
\chi_{\mathbf{v}} : \ZZ_p^n \to \CC^*, \quad \mathbf{x} \mapsto
\zeta_p^{\mathbf{v} \cdot \mathbf{x}},
\end{equation}
where $\mathbf{v} \in \ZZ_p^n$ and $\zeta_p \defeq e^{2 \pi \sqrt{-1} / p}$ is the
$p$-th root of unity. Here $\mathbf{v} \cdot \mathbf{x}$ is the inner product
modulo $p$. So,
\begin{equation}
\label{eqn:lambda}\lambda(\mathbf{v})=\sum_{\mathbf{x}\in \ZZ_p(\mathbf{t})} \zeta_p^{\mathbf{v} \cdot \mathbf{x}}.
\end{equation}
\subsection{Weight Enumerators of Codes}

A $\ZZ_p$-code $C$ is a $\ZZ_p$-submodule of $\ZZ_p^n$.
The \emph{(complete) weight enumerator} of $C$ is a polynomial  defined as
$$
A_C(x_0,x_1,\dots,x_{p-1}) =
\sum_{\mathbf{c} \in C}\prod_{i \in \ZZ_p}x_i^{\wt_i(\mathbf{c})}.
$$
For simplicity, we write $f[\mathbf{t}]$ for the coefficient of
$x_0^{t_0}x_1^{t_1}\dots x_{p-1}^{t_{p-1}}$ in a polynomial
$f(x_0,\dots,x_{p-1})$. Then
$A_C[\mathbf{t}]$ is exactly the number of codewords
of type $\mathbf{t}=(t_0,\ldots,t_{p-1})$ in $C$.
The dual code $\dual{C}$ of $C$ is defined as
$$
\dual{C} \defeq \set{\mathbf{w} \in \ZZ_p^n}{\forall ~\mathbf{v} \in C,
\mathbf{w} \cdot \mathbf{v} = 0},
$$
which is again a $\ZZ_p$-code.

Now, we consider the weight enumerator of codes generated by a single vector.
That is, for $\mathbf{v} \in \ZZ_p^n$, let
$$
\langle\mathbf{v}\rangle_p \defeq \set{a\mathbf{v}}{a \in \ZZ_p}
$$
be the code generated by $\mathbf{v}$. It is easy to see that both  enumerators
$A_{\langle\mathbf{v}\rangle_p}$ and $A_{\dual{\langle\mathbf{v}\rangle_p}}$
depend only on the type $\mathbf{t} = \mathbf{t}(\mathbf{v})$ of $\mathbf{v}$,
so we write $A_{\mathbf{t}} \defeq A_{\langle\mathbf{v}\rangle_p}$ and
$A_{\dual{\mathbf{t}}} \defeq A_{\dual{\langle\mathbf{v}\rangle_p}}$. The
following is a simple generalization of \cite[Lemma 1]{OUR} from $p=3$ to every
$p$.

\begin{lemma}[\cite{OUR}]
	\label{lemma:duality}
Let $\mathbf{s}$ and $\mathbf{t}$ be two types of vectors in $\ZZ_p^n$. Then
\begin{equation}
	\label{eqn:duality}
\binom{n}{\mathbf{s}} \cdot A_{\dual{\mathbf{s}}}[\mathbf{t}] =
\binom{n}{\mathbf{t}} \cdot A_{\dual{\mathbf{t}}}[\mathbf{s}].
\end{equation}
\end{lemma}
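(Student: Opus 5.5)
Both sides of \autoref{eqn:duality} count the same finite set of pairs, and I will prove the identity by counting it in two ways. Put
$$
N \defeq \card{\set{(\mathbf{v},\mathbf{w}) \in \ZZ_p(\mathbf{s}) \times \ZZ_p(\mathbf{t})}{\mathbf{v} \cdot \mathbf{w} = 0}},
$$
where the inner product is taken modulo $p$. This relation is symmetric in $\mathbf{v}$ and $\mathbf{w}$. So it is enough to show that $N$ equals the left-hand side when we fix $\mathbf{v}$ first, and the right-hand side when we fix $\mathbf{w}$ first.

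\emph{Fixing $\mathbf{v}$ first.} Take $\mathbf{v}$ of type $\mathbf{s}$.
\begin{itemize}
\item Since $\langle\mathbf{v}\rangle_p = \set{a\mathbf{v}}{a \in \ZZ_p}$, a vector $\mathbf{w}$ satisfies $\mathbf{w} \cdot \mathbf{v} = 0$ exactly when $\mathbf{w} \cdot (a\mathbf{v}) = a(\mathbf{w}\cdot\mathbf{v}) = 0$ for all $a \in \ZZ_p$. Hence $\mathbf{w} \cdot \mathbf{v} = 0$ if and only if $\mathbf{w} \in \dual{\langle\mathbf{v}\rangle_p}$.
\item So the number of $\mathbf{w}$ of type $\mathbf{t}$ with $\mathbf{w}\cdot\mathbf{v}=0$ is the coefficient $A_{\dual{\langle\mathbf{v}\rangle_p}}[\mathbf{t}]$.
\item As noted before the lemma, this coefficient depends only on the type of $\mathbf{v}$. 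One way to see it: any two vectors of the same type differ by a coordinate permutation $\pi$. Since $\pi(\mathbf{w})\cdot\pi(\mathbf{v}) = \mathbf{w}\cdot\mathbf{v}$, the permutation $\pi$ maps $\dual{\langle\mathbf{v}\rangle_p}$ bijectively onto $\dual{\langle\pi\mathbf{v}\rangle_p}$, and it preserves types.
\item The count is therefore $A_{\dual{\mathbf{s}}}[\mathbf{t}]$ for every $\mathbf{v}$. Summing over the $\binom{n}{\mathbf{s}}$ choices of $\mathbf{v}$ gives $N = \binom{n}{\mathbf{s}} A_{\dual{\mathbf{s}}}[\mathbf{t}]$.
\end{itemize}

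\emph{Fixing $\mathbf{w}$ first.} The same argument with the roles of $\mathbf{s}$ and $\mathbf{t}$ exchanged gives $N = \binom{n}{\mathbf{t}} A_{\dual{\mathbf{t}}}[\mathbf{s}]$. Comparing the two expressions for $N$ yields \autoref{eqn:duality}.

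There is no real obstacle in this argument. The one point to check is that the code generated by $\mathbf{v}$ over the ring $\ZZ_p$ (with $p$ not necessarily prime) is just $\set{a\mathbf{v}}{a\in\ZZ_p}$. This holds because a $\ZZ_p$-submodule generated by a single element is exactly its set of scalar multiples. Consequently orthogonality to $\mathbf{v}$ is equivalent to orthogonality to the whole code $\langle\mathbf{v}\rangle_p$, with no field structure needed.
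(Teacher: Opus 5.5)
Your proof is correct and is essentially the paper's argument. The paper also double-counts edges of the bipartite graph on $\ZZ_p(\mathbf{s}) \cup \ZZ_p(\mathbf{t})$ joined by $\mathbf{v}\cdot\mathbf{w}=0$, which is your count $N$. Your extra checks (orthogonality to $\mathbf{v}$ is the same as membership in $\dual{\langle\mathbf{v}\rangle_p}$, and the permutation argument for dependence only on type) fill in details the paper leaves implicit.
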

\begin{proof}
Consider the bipartite graph on vertices in $\ZZ_p(\mathbf{s}) \cup
\ZZ_p(\mathbf{t})$, where $\mathbf{v} \in \ZZ_p(\mathbf{s})$ and $\mathbf{w} \in
\ZZ_p(\mathbf{t})$ are adjacent if and only if $\mathbf{v} \cdot \mathbf{w} =
0$. The degree of $\mathbf{v}$ in this bipartite graph is exactly
$A_{\dual{\mathbf{s}}}[\mathbf{t}]$, and the degree of $\mathbf{w}$ is exactly
$A_{\dual{\mathbf{t}}}[\mathbf{s}]$. Therefore, by counting the number of edges, we have 
$$
\card{\ZZ_p(\mathbf{s})} \cdot A_{\dual{\mathbf{s}}}[\mathbf{t}] = \card{\ZZ_p(\mathbf{t})} \cdot A_{\dual{\mathbf{t}}}[\mathbf{s}].
$$
The proof is completed by noting that $\card{\ZZ_p(\mathbf{s})} = \binom{n}{\mathbf{s}}$ and
$\card{\ZZ_p(\mathbf{t})} = \binom{n}{\mathbf{t}}$.
\end{proof}
When $p=2$, let $\mathbf{s}=(n-i,i)$ and $\mathbf{t}=(n-j,j)$. Then applying \autoref{lemma:duality} with simple computations gives the well-known identity
\begin{equation}
	\label{eqn:duality_K}
\binom{n}{i} K_j(i) = \binom{n}{j}K_i(j),
\end{equation}
where $K_j(x)$ is the Krawtchouk polynomial \cite{KrawtchoukPolynomialsLevenshtein} defined by
\begin{equation}
	\label{eqn:generating_function_K}
K_j(x) \defeq \sum_{i=0}^j (-1)^i\binom{x}{i}\binom{n-x}{j-i}= \qty((y+z)^{n-x}(y-z)^x)[y^{n-j}z^j].
\end{equation}

The weight enumerators of a code $C$ and its dual $\dual{C}$ are related by the
MacWilliams identity \cite[Section 2.3.5]{SelfDual}. We state them for $p=2,4$ for
later use. When $p = 2$, we have \cite{macwilliams1977theory}
\cite{Wan2000}, 
\begin{equation}
	\label{eqn:MacWilliams_2}
A_{\dual{C}}(x_0,x_1) = \frac{1}{\card{C}}A_C(x_0+x_1,x_0-x_1),
\end{equation}
and for $p = 4$, we have \cite{Wan2000},\cite[Section 2.3.5]{SelfDual} 
\begin{multline}
\label{eqn:MacWilliams_4}
A_{\dual{C}}(x_0,x_1,x_2,x_3) =
\frac{1}{\card{C}}A_C(x_0+x_1+x_2+x_3,
x_0+\zeta_4 x_1 - x_2 - \zeta_4 x_3, \\
x_0-x_1+x_2-x_3,
x_0-\zeta_4 x_1 - x_2 + \zeta_4 x_3).
\end{multline}

\section{Bounds on the Smallest Eigenvalue of Hamming Graphs}
\label{sec:lb_Hamming}

This section is devoted to the proof of \autoref{eq:min}, the lower bound on $\lambda_{\min}(H(n,j))$ for even
$j < n/2$. In fact, we will show that for large $n$ (see 
\autoref{corollary:lb_Hamming}),
$$
|\lambda_{\min}(H(n,j)) |= 
\begin{cases}
O\left(n^{j/2}\right), & j = O(1), \\
O\qty(\left(e(1+\sqrt{2})\sqrt{(1-\alpha)/\alpha}\right)^{\alpha n}), & j = \alpha n, ~0 < \alpha \le 0.17.
\end{cases}
$$

Note that $H(n,j) = \cay(\ZZ_2^n,(n-j,j))$.
 For $\mathbf{w} \in \ZZ_2^n$ with Hamming
weight $w = \wt(\mathbf{w})$,  by \autoref{eqn:eigenvalue_Cayley_graph},
\begin{equation}
	\label{eqn:common_ev}
E_\mathbf{w} = (\chi_{\mathbf{w}}(\mathbf{v}))_{\mathbf{v} \in \ZZ_2^n} = ((-1)^{\mathbf{w}
\cdot \mathbf{v}})_{\mathbf{v} \in \ZZ_2^n} \in \CC^{\ZZ_2^n}
\end{equation}
 is an eigenvector of
$H(n,j)$ with the corresponding eigenvalue \cite{SmallestEigenvalue},
\begin{equation}
	\label{eqn:Krawtchouk}
\lambda(\mathbf{w}) = \sum_{\mathbf{v} \in \ZZ_2(n-j,j)}(-1)^{\mathbf{w}\cdot \mathbf{v}}
= \sum_{i=0}^j(-1)^i\binom{w}{i}\binom{n-w}{j-i} = K_j(w).
\end{equation}
So $\lambda_{\min}(H(n,j)) =K_j(w)$ for a specific $w \in [0,n]$. If $j$ is odd,
$H(n,j)$ is bipartite and $\lambda_{\min}(H(n,j))$ $ = -\binom{n}{j}=K_j(n)$. For
even $j \ge n/2$, it was shown in \cite{SmallestEigenvalue} that
\begin{equation}
	\label{eqn:Brouwer}
\lambda_{\min}(H(n,j)) = \begin{cases}
	K_j(1), & j > n/2, \\
	K_j(2), & j = n/2.
\end{cases}
\end{equation}
For the case where $j < n/2$ is even, it will be difficult to determine for
which $w \in [0,n]$, $K_j(w)$ is the smallest for general $j$. But we know more
when $j$ is small.

When $j=2$, it is known \cite{OUR} that
$$
\lambda_{\min}(H(n,2)) = \begin{cases}
K_2(n/2) = -n/2, & n \text{ even}, \\
K_2((n+1)/2) = K_2((n-1)/2) = -(n-1)/2, & n \text{ odd}.
\end{cases}
$$
When $j = 4$, $K_j(x)$ is a polynomial of degree $4$, and 
reaches the minimum at $x_0 = \left(n-\sqrt{3n-4}\right)/2$ and 
$x_1 = \left(n+\sqrt{3n-4}\right)/2$, by analyzing the derivative of $K_j(x)$. 
Also note that $K_j(x)$ is symmetric relative to $x = n/2$ for even $j$. 
Therefore, 
$$\lambda_{\min}(H(n,4)) =
\min \qty(K_4(\floor{x_0}),K_4(\ceil{x_0})) =
\min \qty(K_4(\floor{x_1}),K_4(\ceil{x_1})).
$$ 
In particular,
$\lambda_{\min}(H(n,4))\ge K_4(x_0) = K_4(x_1) = -n^2/4 + 3n/4 - 2/3$, and
$\lambda_{\min}(H(n,4))$  is close to $-n^2/4 + 3n/4 - 2/3$.

Our goal is to give lower bounds on $K_j(w)$ for $w \in [0,n]$ and even $j< n/2$.

\subsection{Roadmap}
In this subsection, we explain the approach to the lower bounds on $K_j(w)$ for
$w \in [0,n]$ and even $j < n/2$. We will use some notions from
the theory of distance-regular graphs \cite{DRG,DRGBook}.

A connected graph $G$ with diameter $D$ is called \emph{distance-regular} if
there are constants $c_i,a_i,b_i$, the so-called \emph{intersection numbers},
such that for all $i\in [0,D]$ and all vertices $u$ and $v$ at distance $i$,
among the neighbors of $v$, there are $c_i$ at distance $i-1$ from $u$, $a_i$ at
distance $i$, and $b_i$ at distance $i + 1$. Then $H(n,1)$ is a distance-regular
graph of diameter $n$ and of intersection numbers \[\text{$c_i = i$, $a_i = 0$ and $b_i = n-i$ with $i\in [0,n]$}.\]In the following, we always refer to $a_i,b_i$ and $c_i$ as the above values.

Let $A_j, j \in [1,n]$ be the adjacency matrix of
$H(n,j)$, and $A_0$ be the identity matrix of size $2^n$. Then
$A_j$ satisfies the following recursion \cite[Page 13]{DRG}
\cite[Section 4.1]{DRGBook},
\begin{equation}
\label{eqn:recurrence_A}
A_1A_j = c_{j+1}A_{j+1} + b_{j-1}A_{j-1}.
\end{equation}
In accordance to \autoref{eqn:recurrence_A}, for $j \in [0,n]$, we consider 
polynomials $q_j(x)$ defined by the recursion,
\begin{equation}
\label{eqn:recurrence_q}
\begin{cases*}
xq_j(x) = c_{j+1}q_{j+1}(x) + b_{j-1}q_{j-1}(x), \\
q_0(x) = 1, q_1(x) = x.
\end{cases*}
\end{equation}
So $q_j(x)$ is a polynomial of degree $j$ satisfying $q_j(A_1) = A_j$, for all
$j \in [0,n]$ \cite[Page 13]{DRG}\cite[Section 4.1]{DRGBook}.
We note that $q_j(x)$ is related to $K_j(x)$ as in the
following lemma.
\begin{lemma}
	\label{lemma:qt_and_K}For all $j \in [0,n]$,
$q_j(n-2x) = K_j(x)$.
\end{lemma}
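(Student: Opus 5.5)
We plan to prove the identity by showing that the sequence $K_j(x)$, $j \in [0,n]$, obeys the same three-term recursion \autoref{eqn:recurrence_q} as $q_j(n-2x)$, with the same initial values. Since a recursion of this form determines the whole sequence once the first two terms are fixed, and since $c_{j+1}=j+1\neq 0$ for $j\le n-1$, this forces $q_j(n-2x)=K_j(x)$ for all $j\in[0,n]$. The initial conditions are immediate from \autoref{eqn:generating_function_K}: $K_0(x)=1=q_0(n-2x)$ and $K_1(x)=(n-x)-x=n-2x=q_1(n-2x)$.

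For the recursion, substituting $c_{j+1}=j+1$ and $b_{j-1}=n-j+1$ into \autoref{eqn:recurrence_q}, we must verify
\[
(n-2x)K_j(x) = (j+1)K_{j+1}(x) + (n-j+1)K_{j-1}(x)
\]
for $j\in[1,n-1]$. We will use the generating function $F(z)=(1+z)^{n-x}(1-z)^x=\sum_j K_j(x)z^j$, which is the dehomogenised form of \autoref{eqn:generating_function_K}. Taking the logarithmic derivative gives
\[
(1-z^2)F'(z) = \bigl((n-x)(1-z)-x(1+z)\bigr)F(z) = \bigl((n-2x)-nz\bigr)F(z).
\]
Comparing the coefficients of $z^j$ yields $(j+1)K_{j+1}-(j-1)K_{j-1} = (n-2x)K_j - nK_{j-1}$. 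Rearranging this is exactly the desired recursion.

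Strictly speaking, this argument holds for integer $x\in[0,n]$, or for all $x$ if we treat $F$ as a formal power series with real exponents. Both sides of the identity are polynomials in $x$ of degree $j$, and they agree at the $n+1$ integer points $x\in[0,n]$. Since $j\le n$, the two polynomials must therefore be equal. An alternative route avoids this point altogether: apply both sides of $q_j(A_1)=A_j$ to the common eigenvector $E_\mathbf{w}$ from \autoref{eqn:common_ev}. This gives $q_j(K_1(w))=K_j(w)$ with $K_1(w)=n-2w$, and polynomial identity in $x$ then follows by the same degree-counting argument. The main obstacle is only bookkeeping: getting the intersection numbers $b_{j-1}=n-j+1$ and $c_{j+1}=j+1$ to match the coefficients after the rearrangement. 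No step is conceptually difficult.
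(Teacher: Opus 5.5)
Your proof is correct, but your main route differs from the paper's. The paper never checks a recurrence for $K_j$. It takes $q_j(A_1)=A_j$ from distance-regular graph theory, applies it to the character eigenvector $E_{\mathbf w}$ to get $q_j(n-2w)=K_j(w)$ for all integers $w\in[0,n]$, and then extends to all $x$ by degree counting. That is exactly the ``alternative route'' you sketch at the end.

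Your main argument instead proves directly that the Krawtchouk polynomials satisfy
\[
(j+1)K_{j+1}(x)=(n-2x)K_j(x)-(n-j+1)K_{j-1}(x).
\]
You get this from the generating function $(1+z)^{n-x}(1-z)^x$, and your coefficient extraction is right. You then use $c_{j+1}=j+1\neq 0$ to conclude that the recursion and the two initial values determine the whole sequence.

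What this buys you:
\begin{itemize}
\item It is self-contained and purely algebraic. It does not rely on the cited matrix identity $q_j(A_1)=A_j$, whose proof is essentially the combinatorial version of the same recurrence, $A_1A_j=c_{j+1}A_{j+1}+b_{j-1}A_{j-1}$.
\item If you treat $x$ as an indeterminate, or use formal binomial series with real exponents, it gives the identity for all $x$ at once, with no interpolation step.
\end{itemize}

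The paper's route is shorter given the graph-theoretic setup already in place. It also explains why the identity holds: both sides are the eigenvalue of $A_j$ on $E_{\mathbf w}$.
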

\begin{proof}
For $\mathbf{w} \in \ZZ_2^n$, by \autoref{eqn:common_ev} and
\autoref{eqn:Krawtchouk}, $A_jE_{\mathbf{w}} = K_j(w)E_{\mathbf{w}}$ where $w =
\wt(\mathbf{w})$. On the other hand,
$$
A_jE_{\mathbf{w}} = q_j(A_1)E_\mathbf{w} = q_j(K_1(w))E_\mathbf{w}.
$$
Note that $K_1(w) = n-2w$, so we have $q_j(n-2w) = K_j(w)$ for all integers $w
\in [0,n]$. As $q_j(n-2x)$ and $K_j(x)$ are both polynomials of degree $j \le n$
and  share the same value at $n+1$ points, we conclude that $q_j(n-2x) =
K_j(x)$ for all $x$.
\end{proof}
By \autoref{lemma:qt_and_K}, finding the smallest $K_j(w)$ for $w \in [0,n]$ is
equivalent to finding the smallest $q_j(n-2w)$. Thus,
\begin{equation}
	\label{eqn:lb_Hamming_overview_0}
	\lambda_{\min}(H(n,j)) \ge \min_{-n \le x \le n} q_{j}(x).
\end{equation}
Next, we compute  all coefficients of $q_j(x)$. Before that, we need  the following notation.
A set $I = \setnd{u_1,\dots,u_k}$ of integers
with $u_1 < u_2 < \dots < u_k$ is \emph{$2$-separated} if $u_{i+1} -
u_i \ge 2$ for all $i \in [k-1]$. 
Let
$\mathcal{I}(k,j)$ be the set of all $k$-subsets of $[0,j-2]$ that are
$2$-separated.
For $I \in \mathcal{I}(k,j)$, let $$R(I) \defeq \prod_{i \in I}b_ic_{i+1}=\prod_{i \in I}(n-i)(i+1)$$ and
$R(\emptyset) \defeq 1$. 
With these notations, we determine  the coefficients of $q_j(x)$ below.
\begin{theorem}
	\label{theorem:coefficients} 
For any $j\in [0,n]$, let $L_i(j)$ be the coefficient of the term $x^{j-i}$ in $q_j(x)$, $i\in [0,j]$.
Then 
\begin{equation}
L_i(j) = \begin{cases}
0, & i \equiv 1 \pmod{2}, \\
\frac{(-1)^{i/2}}{j!} \sum_{I \in \mathcal{I}(i/2,j)}R(I), & i \equiv 0 \pmod{2}.
\end{cases}
\end{equation}
\end{theorem}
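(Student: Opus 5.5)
Since $c_{j+1}=j+1$ and $b_{j-1}=n-j+1$, the recursion \autoref{eqn:recurrence_q} reads $(j+1)q_{j+1}(x) = xq_j(x) - (n-j+1)q_{j-1}(x)$. It is therefore convenient to work with the normalized polynomials $p_j(x) \defeq j!\,q_j(x)$, which satisfy
\[
p_{j+1}(x) = x p_j(x) - j(n-j+1)\,p_{j-1}(x) = x p_j(x) - b_{j-1}c_j\, p_{j-1}(x),
\]
with $p_0 = 1$ and $p_1 = x$. The claim is then equivalent to the statement that the coefficient of $x^{j-i}$ in $p_j$ vanishes for odd $i$ and equals $(-1)^{i/2}\sum_{I\in\mathcal{I}(i/2,j)}R(I)$ for even $i$. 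Here $R(I)=\prod_{u\in I} b_u c_{u+1}$, and the weight $b_{j-1}c_j$ appearing in the recursion is exactly the factor indexed by $u=j-1$.

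The plan is to give a combinatorial expansion of $p_j$ as a matching (monomer--dimer) polynomial on the path with sites $0,1,\dots,j-1$. Each $2$-separated set $I\subseteq[0,j-2]$ corresponds to a set of disjoint dimers $\{u,u+1\}$, $u\in I$, on this path. The target identity is
\[
p_j(x) = \sum_{k\ge 0}\ (-1)^k x^{j-2k}\sum_{I\in\mathcal{I}(k,j)} R(I).
\]
This identity immediately gives the theorem. It shows that only exponents $j-2k$ occur, so $L_i(j)=0$ for odd $i$, and reading off the coefficient of $x^{j-2k}$ and dividing by $j!$ gives the stated formula for even $i=2k$.

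I would prove the expansion by induction on $j$. The base cases are $j=0$ (only $I=\emptyset$, giving $1$) and $j=1$ (again only $\emptyset$, since $[0,-1]=\emptyset$, giving $x$). For the inductive step, I split $\mathcal{I}(k,j+1)$ according to whether the largest possible element $j-1$ lies in $I$:
\begin{itemize}
\item If $j-1\notin I$, then $I\in\mathcal{I}(k,j)$, and these sets contribute $x\cdot[\text{expansion of } p_j]$.
\item If $j-1\in I$, then $2$-separation forces $j-2\notin I$, so $I\setminus\{j-1\}\in\mathcal{I}(k-1,j-1)$. Conversely, any such set can be extended by $j-1$. Since $R(I)=b_{j-1}c_j\,R(I\setminus\{j-1\})$, these sets contribute $-b_{j-1}c_j\cdot[\text{expansion of } p_{j-1}]$, where the extra sign and the shift of $x^{j+1-2k}$ to $x^{(j-1)-2(k-1)}$ match.
\end{itemize}
Summing the two parts reproduces the recursion for $p_{j+1}$.

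There is no real obstacle here. The only point requiring care is index bookkeeping. One must check that $\mathcal{I}(k,j)$ consists of subsets of $[0,j-2]$, so that the new admissible element at step $j+1$ is exactly $j-1$. One must also confirm that the factor $b_{j-1}c_j=(n-j+1)j$ is the one the recursion produces. Boundary cases are covered by setting $\mathcal{I}(k,j)=\emptyset$ when $k<0$ or when the sets cannot fit, and $\mathcal{I}(0,j)=\{\emptyset\}$.
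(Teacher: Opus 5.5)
Your proposal is correct and is essentially the paper's argument. Your $p_j = j!\,q_j$ has the paper's $M_i(j)=j!\,L_i(j)$ as its coefficients, and your recursion for $p_j$ is the polynomial form of the paper's recurrence $M_i(j+1)=M_i(j)-b_{j-1}c_jM_{i-2}(j-1)$. The key step is the same: you split $\mathcal{I}(k,j+1)$ according to whether $j-1\in I$ and use $R(I)=b_{j-1}c_j\,R(I\setminus\{j-1\})$. The only difference is packaging. By inducting on $j$ for the whole polynomial at once, you handle all $i$ simultaneously, including the vanishing for odd $i$. This avoids the paper's double induction on $(i,j)$ and its separate base-case lemmas for $i\in\{0,1\}$ and $i=j$.
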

The proof of \autoref{theorem:coefficients} will be provided in
\autoref{subsection:proof_coefficients}. Although we have a formula for all
coefficients of $q_j(x)$ in \autoref{theorem:coefficients}, they are too complicated to
determine the minimum value of $q_j(x)$.

From now on, we assume that $j<n/2$ is even. 
Let $r_0,r_1$ be the smallest and largest root of $q_j(x)$, respectively. As
$q_j(x)$ is of degree $j$ and positive leading coefficient, we have
$q_j(x) \ge 0$ for $x \le r_0$ or $x \ge r_1$.
Then \autoref{eqn:lb_Hamming_overview_0} implies that
\begin{equation}
	\label{eqn:lb_Hamming_overview}
	\lambda_{\min}(H(n,j)) \ge \min_{r_0 < x < r_1} q_{j}(x).
\end{equation}
The bound on $r_0,r_1$ can be obtained from the  roots of
$K_j(x)$. It is known \cite[Eq. (126)]{KrawtchoukPolynomialsLevenshtein} that $K_j(x)$ has $j$ simple roots $0 < x_{1} < x_{2} < \dots
<x_{j} < n$, and for $j < n/2$, the smallest
root $x_{1}$ is bounded by
\begin{equation}
	\label{eqn:smallest_zero_K}
x_{1} \ge \frac{n}{2} - \frac{1}{2}\qty(\sqrt{(j-1)(n-j+2)} + \sqrt{(j-2)(n-j+3)}).
\end{equation}
As $K_j(x) = (-1)^jK_j(n-x)$, the roots of $K_j(x)$ are symmetric with respect
to $n/2$. By \autoref{lemma:qt_and_K}, $r_0 = n-2x_{j}$ and $r_1 = n-2x_{1}$. In
particular,
\begin{equation}
	\label{eqn:r0r1}
-r_0 = r_1 \le \sqrt{(j-1)(n-j+2)} + \sqrt{(j-2)(n-j+3)}.
\end{equation}

Next, we estimate $q_j(x)$ and provide the desired lower bounds on the smallest
eigenvalue of $H(n,j)$ by combining \autoref{theorem:coefficients},
\autoref{eqn:lb_Hamming_overview} and \autoref{eqn:r0r1}.

\subsection{Lower Bounds on the Smallest Eigenvalue}
The lower bounds are derived by considering the magnitude of $r_0,r_1$ and
the coefficients of $q_j(x)$. First, we consider the regime when $j$ is fixed and
$n$ is large enough, namely, $j = O(1)$.
\begin{theorem}
	\label{theorem:lb_q} For fixed even $j$ and large enough $n$, we have $\abs{q_{j}(x)} = O(n^{j/2})$ when $r_0 \leq x \leq r_1$.
\end{theorem}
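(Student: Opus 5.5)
We bound $q_j(x)$ term by term using the coefficient formula of \autoref{theorem:coefficients} together with the root bound \autoref{eqn:r0r1}. Fix an even $j$ and let $n\to\infty$. By \autoref{eqn:r0r1}, every $x$ with $r_0\le x\le r_1$ satisfies
\[
|x|\le r_1\le 2\sqrt{(j-1)(n+3)},
\]
so $|x| = O(\sqrt{n})$ with a constant depending only on $j$. We then write
\[
q_j(x)=\sum_{i \text{ even},\, 0\le i\le j} L_i(j)\,x^{j-i}
\]
and apply the triangle inequality:
\[
|q_j(x)|\le \sum_{k=0}^{j/2} |L_{2k}(j)|\,|x|^{j-2k}.
\]
The number of summands is $j/2+1=O(1)$. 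It therefore suffices to show that each summand is $O(n^{j/2})$.

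For the coefficient bound, \autoref{theorem:coefficients} gives
\[
|L_{2k}(j)|=\frac{1}{j!}\sum_{I\in\mathcal{I}(k,j)}R(I).
\]
The set $\mathcal{I}(k,j)$ consists of $2$-separated $k$-subsets of $[0,j-2]$, so $|\mathcal{I}(k,j)|\le \binom{j-1}{k}=O(1)$. For each such $I$,
\[
R(I)=\prod_{i\in I}(n-i)(i+1)\le (jn)^k = O(n^k),
\]
since $i\le j-2$ gives $(n-i)(i+1)\le jn$. Hence $|L_{2k}(j)|=O(n^k)$, with the implicit constant depending only on $j$.

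Combining the two bounds,
\[
|L_{2k}(j)|\,|x|^{j-2k}=O(n^k)\cdot O\bigl(n^{(j-2k)/2}\bigr)=O(n^{j/2})
\]
for every $k\in[0,j/2]$. Summing $O(1)$ such terms gives $|q_j(x)|=O(n^{j/2})$ on $[r_0,r_1]$, which is the claim.

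The argument uses no cancellation between terms; it needs only that the number of terms and the constants depend on $j$ alone. The one place needing care is keeping these constants independent of $n$: both the count $|\mathcal{I}(k,j)|$ and the factor $j$ in $R(I)\le (jn)^k$ are bounded in terms of $j$ only. I expect no real obstacle. The substantive input is the root bound \autoref{eqn:r0r1}, which confines $x$ to an $O(\sqrt n)$ window. Without it, the trivial range $|x|\le n$ from \autoref{eqn:lb_Hamming_overview_0} would only give $O(n^j)$.
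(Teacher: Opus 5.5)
Your proof is correct and follows the paper's argument: \autoref{eqn:r0r1} gives $|x| = O(\sqrt{n})$ on $[r_0,r_1]$, and \autoref{theorem:coefficients} gives $|L_{2k}(j)| = O(n^k)$ because $R(I) = O(n^k)$ and $|\mathcal{I}(k,j)|$ depends only on $j$; the triangle inequality over the $j/2+1$ terms then finishes. Your explicit constants, $r_1 \le 2\sqrt{(j-1)(n+3)}$ and $R(I) \le (jn)^k$, just make precise the $O(\cdot)$ bookkeeping that the paper leaves implicit.
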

\begin{proof}  
Since each element in $I \in \mathcal{I}(k,j)$ is less than $j$, when $j$ is fixed
and $n$ is large, $R(I)=\prod_{i \in I}(n-i)(i+1) = O(n^{k})$. By
\autoref{theorem:coefficients}, $\abs{L_{2k}(j)} = O(n^k)$. By
\autoref{eqn:r0r1}, $\abs{x} = O(\sqrt{n})$ for $r_0 \leq x \leq r_1$ when $j$
is fixed. Therefore, 
$$
\abs{q_{j}(x)} \le \sum_{k=0}^{j/2} \abs{L_{2k}(j)} \cdot \abs{x}^{j-2k}
= O(n^{j/2}).
$$
\end{proof}

The second regime we consider is $j = \Theta(n)$.
\begin{theorem}
	\label{theorem:lb_q_theta} 
Let $j=\alpha n$ be even with $0 < \alpha < 1/3$. Then for $r_0 \leq x \leq
r_1$,
$$\abs{q_{j}(x)} = \frac{(C_\alpha n)^{j}}{\factorial{j}} \cdot
\qty(\frac{(1+\sqrt{2})^{j+1}}{2\sqrt{2}}+o(1)),
$$
where $C_\alpha=\sqrt{\alpha(1-\alpha)}$.
\end{theorem}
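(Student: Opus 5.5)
The plan is to bound $|q_j(x)|$ on $[r_0,r_1]$ by a majorant built from the coefficients of \autoref{theorem:coefficients}, and to evaluate that majorant exactly through a Pell-type recursion. The right-hand side of the statement is $\frac{(C_\alpha n)^j}{j!}$ times $u_j\defeq \frac{(1+\sqrt2)^{j+1}-(1-\sqrt2)^{j+1}}{2\sqrt2}$, up to the $o(1)$. Here $(1-\sqrt2)^{j+1}\to 0$, and $u_j$ is the Pell sequence $u_0=1$, $u_1=2$, $u_j=2u_{j-1}+u_{j-2}$. I should say at the outset that $q_j$ has $j$ roots inside $(r_0,r_1)$, so the displayed identity cannot hold pointwise as a two-sided equality. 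I will read it, as the paper uses it in \autoref{corollary:lb_Hamming}, as the sharp evaluation of the quantity $\max_{r_0\le x\le r_1}\sum_k|L_{2k}(j)|\,|x|^{j-2k}$ controlling $|q_j(x)|$. The lower bound on $\lambda_{\min}$ then follows from $|q_j(x)|\le$ this quantity together with \autoref{eqn:lb_Hamming_overview}.

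\textbf{Step 1 (size of the interval).} By \autoref{eqn:r0r1} with $j=\alpha n$,
\[
r_1\le \sqrt{(j-1)(n-j+2)}+\sqrt{(j-2)(n-j+3)}=2C_\alpha n\,(1+O(1/n)).
\]
Hence $|x|^{j-2k}\le (2C_\alpha n)^{j-2k}(1+O(1/n))^{j}$. Since $(1+O(1/n))^{j}=e^{O(\alpha)}$ is bounded, this factor is not $1+o(1)$ as it stands. I will therefore keep the exact value $\rho=r_1/(C_\alpha n)$ and note that $\rho=2+O(1/n)$. The resulting constant is absorbed by working with $\sum_k\binom{j-k}{k}\rho^{j-2k}$, whose growth rate differs from $u_j$ only by a factor $1+o(1)$ per step. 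This is the step where the precise bookkeeping of the $o(1)$ must be done carefully, and it is the main obstacle.

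\textbf{Step 2 (coefficients).} For $I\in\mathcal I(k,j)$ every $i\in I$ satisfies $i\le j-2<n/2$. The function $(n-i)(i+1)$ is increasing on this range, so $b_ic_{i+1}\le (n-j)j\,(1+o(1))$, and the bound
\[
R(I)\le (C_\alpha n)^{2k}(1+O(1/n))^k
\]
is tight up to lower-order terms for the $i$ near $j$. The number of $2$-separated $k$-subsets of $[0,j-2]$ is $\binom{j-k}{k}$, by the standard bijection $u_t\mapsto u_t-(t-1)$ onto $k$-subsets of $[0,j-k-1]$. Thus, by \autoref{theorem:coefficients},
\[
|L_{2k}(j)|\le \frac{(C_\alpha n)^{2k}}{j!}\binom{j-k}{k}(1+o(1)).
\]

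\textbf{Step 3 (summation).} Combining Steps 1 and 2 gives
\[
|q_j(x)|\le \frac{(C_\alpha n)^j}{j!}\sum_{k=0}^{j/2}\binom{j-k}{k}2^{j-2k}\,(1+o(1)).
\]
The sum $S_j=\sum_k\binom{j-k}{k}2^{j-2k}$ satisfies $S_j=2S_{j-1}+S_{j-2}$ by Pascal's rule, with $S_0=1$ and $S_1=2$. Hence $S_j=u_j=\frac{(1+\sqrt2)^{j+1}}{2\sqrt2}+o(1)$, which gives the stated expression.

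For the sharpness of the majorant, the matching lower bound on $\sum_k|L_{2k}(j)|r_1^{j-2k}$ comes from the same computation. The $k$-sets dominating $u_j$ concentrate on indices $i$ near $j$, where $b_ic_{i+1}=(C_\alpha n)^2(1+o(1))$, and $r_1\ge 2C_\alpha n(1-o(1))$ follows from the known asymptotics of the extreme Krawtchouk root. Verifying that the dominant $k\approx j/(2\sqrt2)$ terms indeed use mostly such indices, and that $\alpha<1/3$ keeps $2C_\alpha n<n$, so that $x=r_1$ lies in the admissible range of \autoref{eqn:lb_Hamming_overview_0}, is the part I expect to need the most care.
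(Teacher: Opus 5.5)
Your overall route matches the paper's: majorize $|q_j(x)|$ by $\sum_k|L_{2k}(j)|\,|x|^{j-2k}$, bound $R(I)$, count $|\mathcal{I}(k,j)|=\binom{j-k}{k}$, and evaluate the resulting sum, which is the Pell number $u_j$. Your Pell recursion is an equivalent substitute for the paper's closed form of $f_j(1/4)$.

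\textbf{The gap is in the error bookkeeping, and it comes from not using $\alpha<1/3$ where it is actually needed.}
\begin{enumerate}
\item In Step 2 you turn $(1+O(1/n))^k$ into $1+o(1)$. But $k$ runs up to $\alpha n/2$, so this factor is only $e^{O(1)}$, which is the very objection you raise in Step 1.
\item The remedy you propose in Step 1 (keep $\rho=r_1/(C_\alpha n)$ exact and lose ``$1+o(1)$ per step'') does not repair this. A per-step loss of $1+O(1/n)$ compounds over $j+1$ steps into a constant factor.
\item Done carefully, Steps 1--3 give only $|q_j(x)|\le e^{O(1)}\frac{(C_\alpha n)^j}{j!}u_j$. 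That is not the statement, whose $o(1)$ is additive next to the exponentially large term $\frac{(1+\sqrt2)^{j+1}}{2\sqrt2}$.
\end{enumerate}
The missing observation is that there is no error at all. Since $j-2<(n-1)/2$, the function $(n-i)(i+1)$ is increasing on $[0,j-2]$. Hence for every such $i$,
\[
(n-i)(i+1)\le (n-j+2)(j-1)=C_\alpha^2n^2-(1-3\alpha)n-2\le C_\alpha^2n^2 ,
\]
and the last inequality holds precisely because $\alpha<1/3$. Both radicands in \autoref{eqn:r0r1} are of this form (with $i=j-2$ and $i=j-3$). So $r_1\le 2C_\alpha n$ exactly and $R(I)\le (C_\alpha n)^{2k}$ exactly. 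Your Steps 2--3 then give $|q_j(x)|\le\frac{(C_\alpha n)^j}{j!}u_j$ with no loss, and the $o(1)$ is just $-\frac{(1-\sqrt2)^{j+1}}{2\sqrt2}$. The condition $2C_\alpha n<n$ that you attribute to $\alpha<1/3$ in fact holds for every $\alpha<1/2$. Also, the roots of $q_j$ lie in $[-n,n]$ automatically, so admissibility of $x=r_1$ is not an issue.

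\textbf{Your ``sharpness'' paragraph should be dropped: the matching lower bound is false, not merely delicate.} The only tenable reading of the theorem's ``$=$'' is ``$\le$''. That is all the paper proves, and all that \autoref{corollary:lb_Hamming} uses.
\begin{enumerate}
\item The terms dominating $\sum_k\binom{j-k}{k}4^{-k}$ have $k\approx j/(4+2\sqrt2)\approx 0.146\,j$, not $j/(2\sqrt2)$.
\item Almost all $2$-separated $k$-subsets of $[0,j-2]$ spread over the whole interval rather than concentrating near $j$.
\item Concretely, for $i\le j/2$ one has $b_ic_{i+1}\le n(j/2+1)\le\frac34(C_\alpha n)^2$ for large $n$, because $\frac{1}{2(1-\alpha)}<\frac34$.
\item Bound the weighted count of $2$-separated sets by the product of the counts on $[0,j/2]$ and on $(j/2,j-2]$. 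This shows that $\sum_k|L_{2k}(j)|\,r_1^{j-2k}$ is at most $O\bigl((\frac{2+\sqrt7}{2+2\sqrt2})^{j/2}\bigr)$ times $\frac{(C_\alpha n)^j}{j!}u_j$, which is exponentially smaller.
\end{enumerate}
This is why the paper's conclusion calls the estimate rough.
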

\begin{proof} For each $i\leq j-2$, we have  $(n-i)(i+1)\leq (n-(j-2))((j-2)+1) = (n-\alpha n+2)(\alpha n-1)\leq (1-\alpha)\alpha n^2$ since $0 < \alpha < 1/3$. So, first we have  $\abs{x} \leq 2C_\alpha n$ for $r_0 \leq x \leq r_1$ by \autoref{eqn:r0r1}. Second, 
for $I \in \mathcal{I}(k,j)$, we have $R(I) =
\prod_{i \in I}(n-i)(i+1) \leq (C_\alpha n)^{2k}$.
Since $\card{\mathcal{I}(k,j)} = \binom{j-k}{k}$, 
we have $\abs{L_{2k}(j)} \leq \binom{j-k}{k} \qty(C_\alpha n)^{2k}/j!$.

We conclude that
\begin{align}
\abs{q_{j}(x)} &\le \sum_{k=0}^{j/2}{\abs{L_{2k}(j)}} \cdot \abs{x}^{j-2k}
\leq \frac{(2C_\alpha n)^j}{\factorial{j}} 
\sum_{k=0}^{j/2}\binom{j-k}{k}\qty(\frac{1}{4})^k
\\
&=\frac{(2C_\alpha n)^{j}}{\factorial{j}} \cdot
\frac{(1+\sqrt{2})^{j+1} -
(1-\sqrt{2})^{j+1}}{2^{j+1}\sqrt{2}} \label{eqn:by_Fibonacci}  \\ 
&=\frac{(C_\alpha n)^{j}}{\factorial{j}} \cdot \qty(\frac{(1+\sqrt{2})^{j+1}}{2\sqrt{2}}+o(1)),
\label{eqn:by_Fibonacci1} 
\end{align}
where \autoref{eqn:by_Fibonacci} follows from \autoref{eqn:f2} in
\autoref{appsec:Fibonacci}.
\end{proof}

\begin{corollary}
	\label{corollary:lb_Hamming}
Let $j < n/2$ be even and let $n$ be large enough. Then
$$
|\lambda_{\min}(H(n,j)) |= 
\begin{cases}
O\left(n^{j/2}\right), & j = O(1), \\
O\qty(\left(e(1+\sqrt{2})\sqrt{(1-\alpha)/\alpha}\right)^{\alpha n}), & j = \alpha n,~ 0<\alpha< 1/3.
\end{cases}
$$
\end{corollary}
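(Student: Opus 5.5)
The corollary follows by combining \autoref{eqn:lb_Hamming_overview} with the two estimates just proved. By \autoref{eqn:lb_Hamming_overview}, $\lambda_{\min}(H(n,j)) \ge \min_{r_0<x<r_1} q_j(x) \ge -\max_{r_0\le x\le r_1}\abs{q_j(x)}$. We also have $\lambda_{\min}(H(n,j))<0$, since the trace of the adjacency matrix is zero. Hence $\abs{\lambda_{\min}(H(n,j))} \le \max_{r_0\le x\le r_1}\abs{q_j(x)}$, and it remains to bound this maximum in each regime.

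For $j=O(1)$ (fixed even $j$), \autoref{theorem:lb_q} gives $\abs{q_j(x)}=O(n^{j/2})$ on $[r_0,r_1]$, so the first case follows immediately.

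For $j=\alpha n$ with $0<\alpha<1/3$, \autoref{theorem:lb_q_theta} gives
\[
\abs{q_j(x)} \le \frac{(C_\alpha n)^j}{j!}\qty(\frac{(1+\sqrt2)^{j+1}}{2\sqrt2}+o(1)).
\]
The bracket is $O((1+\sqrt2)^{j})$. To handle the factorial, use the elementary bound $j!\ge (j/e)^j$, which follows from $e^j=\sum_k j^k/k!\ge j^j/j!$. Substituting $j=\alpha n$ gives
\[
\frac{(C_\alpha n)^j}{j!} \le \qty(\frac{e\sqrt{\alpha(1-\alpha)}\,n}{\alpha n})^{\alpha n} = \qty(e\sqrt{(1-\alpha)/\alpha})^{\alpha n}.
\]
Multiplying by $(1+\sqrt2)^{\alpha n}$ yields $O\qty(\qty(e(1+\sqrt2)\sqrt{(1-\alpha)/\alpha})^{\alpha n})$, the constant factor $(1+\sqrt2)/(2\sqrt2)$ being absorbed into the $O$.

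The only step needing care is the Stirling-type estimate of $j!$ and the bookkeeping of constants. Everything else is direct substitution, so I do not expect a real obstacle. Since $e(1+\sqrt2)\approx 6.563$, this also recovers the bound stated in \autoref{eq:min}.
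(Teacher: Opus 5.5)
Your proposal is correct and follows essentially the same route as the paper: bound $\abs{\lambda_{\min}(H(n,j))}$ by $\max_{r_0\le x\le r_1}\abs{q_j(x)}$ via \autoref{eqn:lb_Hamming_overview}, then apply \autoref{theorem:lb_q} and \autoref{theorem:lb_q_theta}. The only differences are details the paper leaves implicit: you justify $\lambda_{\min}<0$ via the zero trace, and you use the elementary bound $j!\ge (j/e)^j$ in place of full Stirling. That bound suffices for an $O$-upper bound; Stirling would only add a harmless $1/\sqrt{2\pi j}$ factor.
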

\begin{proof}
By \autoref{eqn:lb_Hamming_overview}, we have $|\lambda_{\min}(H(n,j)) |\le
\max_{r_0 < x < r_1} \abs{q_j(x)}$. The result follows immediately from
\autoref{theorem:lb_q}, \autoref{theorem:lb_q_theta}, and 
Stirling's approximation \cite{Stirling}.
\end{proof}

There is a trivial bound $|\lambda_{\min}(H(n,j))| \le \binom{n}{j}$. 
\autoref{corollary:lb_Hamming} provides a great improvement on this 
trivial bound for fixed $j$. We compare them for $j = \alpha n$, where $\binom{n}{j} =
\Theta\qty(\frac{2^{nh(\alpha)}}{\sqrt{n}})$. 
For \autoref{corollary:lb_Hamming} to provide a better
 bound on $|\lambda_{\min}(H(n,j))|$ than the trivial one, it is necessary that 
$$
\qty(e(1+\sqrt{2})\sqrt{(1-\alpha)/\alpha})^j < 2^{nh(\alpha)}
$$ for large enough $n$,
which is equivalent to
\begin{equation}
	\label{eqn:region_alpha}
	\alpha \log_2\qty(e(1+\sqrt{2})\sqrt{(1-\alpha)/\alpha}) < h(\alpha).
\end{equation}
By the numerical method, \autoref{eqn:region_alpha} holds for 
$\alpha \le 0.17$ and does not hold for $\alpha > 0.18$. In other words, 
\autoref{corollary:lb_Hamming} is not trivial for $\alpha \le 0.17$.

\subsection{Proof of \autoref{theorem:coefficients}}
\label{subsection:proof_coefficients}
We finish \autoref{sec:lb_Hamming} with a proof of
\autoref{theorem:coefficients}. Instead of determining $L_i(j)$ for $i\in
[0,j]$, we consider $M_i(j) \defeq c_jc_{j-1}\dots c_1
L_i(j)=\factorial{j}L_i(j)$, where $c_k=k$ is the intersection number of
$H(n,1)$. Then it is enough to show that  for any  given $j \in [1,n]$ and   $i\in
[0,j]$,
\begin{equation}
	\label{eqn:coefficients}
M_i(j) = \begin{cases}
0, & i \equiv 1 \pmod{2}, \\
(-1)^{i/2} \sum_{I \in \mathcal{I}(i/2,j)}R(I), & i \equiv 0 \pmod{2}.
\end{cases}
\end{equation}
The proof is by an induction on $i$ and $j$. We
check the base cases where $i=0,1$ or $i=j$ first.
\begin{lemma}
	\label{lemma:s01} Given $j \in [1,n]$, 
$M_0(j) = 1$, and  $M_1(j) = 0$. 
\end{lemma}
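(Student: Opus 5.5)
The plan is to convert the recursion \autoref{eqn:recurrence_q} into a recursion for the coefficients $M_i(j)=j!\,L_i(j)$, and then induct on $j$. Since $c_k=k$, multiplying the recursion $xq_{j-1}(x)=c_jq_j(x)+b_{j-2}q_{j-2}(x)$ by $(j-1)!$ gives
\[
x\,(j-1)!\,q_{j-1}(x) = j!\,q_j(x) + b_{j-2}(j-1)\,(j-2)!\,q_{j-2}(x).
\]
Comparing coefficients of $x^{j-i}$ yields, for $j\ge 2$,
\[
M_i(j) = M_i(j-1) - b_{j-2}c_{j-1}\,M_{i-2}(j-2),
\]
with the convention that $M_{i}(k)=0$ for $i<0$ or $i>k$. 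For $i=0$ and $i=1$ the second term vanishes, so $M_0(j)=M_0(j-1)$ and $M_1(j)=M_1(j-1)$ for every $j\ge 2$.

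It therefore suffices to check the base values $j=1$ and, for safety, $j=2$. From $q_0=1$ and $q_1=x$ we get $M_0(1)=1$ and $M_1(1)=0$. From $xq_1 = 2q_2 + b_0q_0$ we get $q_2=(x^2-n)/2$, so $M_0(2)=1$ and $M_1(2)=0$. Induction on $j$ then gives $M_0(j)=1$ and $M_1(j)=0$ for all $j\in[1,n]$.

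One could argue more directly: the leading coefficient of $q_j$ is $1/(c_1\cdots c_j)=1/j!$ by the recursion, which gives $M_0(j)=1$ at once. For $M_1$, the polynomial $q_j$ has parity $(-1)^j$, since $q_0$ is even, $q_1$ is odd, and the recursion preserves the alternating parity. Hence the $x^{j-1}$ coefficient is $0$.

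There is no real obstacle here. The only care needed is at the boundary of the recursion, namely that $q_{j-2}$ contributes only to degrees $\le j-2$ and so cannot affect the $x^j$ and $x^{j-1}$ coefficients, and at the case $j=1$, where the recursion is not used.
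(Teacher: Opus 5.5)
Your proposal is correct and follows the paper's proof. In both, you compare the $x^{j}$ and $x^{j-1}$ coefficients in the three-term recursion, observe that the lower-degree $q$-term cannot contribute, conclude $M_0(j)=M_0(j-1)$ and $M_1(j)=M_1(j-1)$, and finish with $q_1=x$. Your parity remark, that $q_j(-x)=(-1)^jq_j(x)$, is a valid shortcut that kills every odd-index coefficient at once.
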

\begin{proof}
By \autoref{eqn:recurrence_q}, we have $c_{j+1}q_{j+1}(x) =
xq_j(x) - b_{j-1}q_{j-1}(x)$. Comparing the coefficients of $x^{j+1}$ on both sides, we have
$c_{j+1}L_0(j+1) = L_0(j)$. Multiplying both sides by $c_jc_{j-1}\dots c_1$
gives $M_0(j+1) = M_0(j)$. As $q_1(x) = x$ and $c_1 = 1$, we have $M_0(1) = 1$.
Thus, $M_0(j) = 1$ for all $j \ge 1$.

Similarly, comparing the coefficients of $x^{j}$, we have $c_{j+1}L_1(j+1) =
L_1(j)$, and thus $M_{1}(j+1) = M_1(j)$. As $M_1(1) = 0$, we have $M_1(j) = 0$
for all $j \ge 1$.
\end{proof}

Next, we consider the case $i=j$.

\begin{lemma}
	\label{lemma:Mtt}
 Given $j \in [1,n]$, 
\begin{equation}
	\label{eqn:Mtt}
M_j(j) = \begin{cases}
	0, & j \equiv 1 \pmod{2}, \\
	(-1)^{j/2}\prod_{k=0}^{j/2-1}b_{2k}c_{2k+1}, & j \equiv 0 \pmod{2}.
\end{cases}
\end{equation}
\end{lemma}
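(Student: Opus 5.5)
We derive a two-step recursion for $M_j(j)$ from the constant terms of the three-term recursion \autoref{eqn:recurrence_q}, then solve it by induction.

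\textbf{The recursion.} The quantity $M_j(j)=\factorial{j}L_j(j)$ is $\factorial{j}$ times the constant term $q_j(0)$ of $q_j(x)$. Evaluating \autoref{eqn:recurrence_q} at $x=0$ gives
\[
c_{j+1}q_{j+1}(0) = -b_{j-1}q_{j-1}(0),
\]
which is the same as comparing the coefficients of $x^0$ on both sides. Multiply both sides by $c_jc_{j-1}\cdots c_1$. The left side becomes $M_{j+1}(j+1)$. On the right side, $c_jc_{j-1}\cdots c_1 = c_jc_{j-1}\cdot c_{j-1}\cdots c_1/c_{j-1}$, so we may write $c_j\cdots c_1 = c_j\,(c_{j-1}\cdots c_1)$. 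This yields
\[
M_{j+1}(j+1) = -b_{j-1}c_j\, M_{j-1}(j-1), \qquad j\ge 1,
\]
with the convention $M_0(0)=q_0(0)=1$.

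\textbf{Base cases.} From $q_1(x)=x$ we get $M_1(1)=0$. From $q_0=1$ we get $M_0(0)=1$.

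\textbf{Odd $j$.} Repeated application of the recursion gives $M_j(j)=0$ for all odd $j$.

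\textbf{Even $j$.} Setting $j=2m$, the recursion reads $M_{2m}(2m) = -b_{2m-2}c_{2m-1}M_{2m-2}(2m-2)$. Induction on $m$ then gives
\[
M_{2m}(2m)=(-1)^m\prod_{k=0}^{m-1} b_{2k}c_{2k+1},
\]
which is exactly \autoref{eqn:Mtt}.

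\textbf{Range of indices.} The recursion \autoref{eqn:recurrence_q} is used only for indices up to $j-1\le n-1$, so all quantities involved are defined.

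\textbf{Consistency with \autoref{eqn:coefficients}.} The only set in $\mathcal{I}(j/2,j)$ is $\{0,2,\dots,j-2\}$, a $2$-separated $j/2$-subset of $[0,j-2]$. Its $R$-value is $\prod b_{2k}c_{2k+1}$, so the formula agrees with \autoref{eqn:coefficients}.

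There is no real obstacle here. The only care needed is the index bookkeeping when converting the $L$-recursion to the $M$-recursion via the factor $c_j$.
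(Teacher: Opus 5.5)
Your proof is correct and is essentially the paper's argument: comparing constant terms in \autoref{eqn:recurrence_q} gives $M_{j}(j)=-b_{j-2}c_{j-1}M_{j-2}(j-2)$, and induction finishes. The only cosmetic difference is that you seed the even case with $M_0(0)=1$ rather than computing $M_2(2)=-b_0c_1$ directly from $q_2$; separately, your intermediate rewriting that divides by $c_{j-1}$ is unnecessary and undefined at $j=1$ (since $c_0=0$), so just write $c_j\cdots c_1=c_j(c_{j-1}\cdots c_1)$ directly.
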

\begin{proof}
Recall that $c_1 = 1$, $q_0(x) = 1$, and $c_1q_1(x) = q_1(x) = x$. By
\autoref{eqn:recurrence_q}, $c_2c_1q_2(x) = c_2q_2(x) = x^2-b_0$. We conclude
that $M_1(1) = 0$, $M_2(2) = -b_0 = -b_0c_1$, and \autoref{eqn:Mtt} holds for $j
= 1,2$. In the following, we show \autoref{eqn:Mtt} for $j \ge 3$ by 
induction on $j$. By \autoref{eqn:recurrence_q}, we have
$$
M_{j}(j) = - b_{j-2}c_{j-1}M_{j-2}(j-2).
$$
As $M_1(1) = 0$, we conclude that $M_j(j) = 0$ for odd $j$ .
For even $j$, by the induction hypothesis,
$$
M_j(j) = -b_{j-2}c_{j-1}(-1)^{j/2-1}\prod_{k=0}^{j/2-2}b_{2k}c_{2k+1} =
(-1)^{j/2} \prod_{k=0}^{j/2-1}b_{2k}c_{2k+1}.
$$
We conclude that \autoref{eqn:Mtt} holds for all $j\ge 1$.
\end{proof}
It can be verified that \autoref{lemma:s01} and \autoref{lemma:Mtt} coincide
with \autoref{eqn:coefficients} for the case $i = 0,1$ or $i=j$.
\begin{proof}[Proof of \autoref{theorem:coefficients}]
The proof is by inductions on $i$ and $j$. First, we carry out an induction on
$i$ for any $j$. The base cases when $i=0,1$ for any $j\in [1,n]$ are assured
by \autoref{lemma:s01}. Suppose $i \ge 2$ and the induction hypothesis on $i$
states that \autoref{eqn:coefficients} holds for any $i' < i$ and all $ j \in
[i',n]$. Now we show that \autoref{eqn:coefficients} holds for  $i$ and all $j
\in [i,n]$. Next we take an induction on $j$ where the base case $j=i$ is
assured by \autoref{lemma:Mtt}. Assume that given $i$,
\autoref{eqn:coefficients}  holds for $i\le j' \le j$, and we prove it for
$j+1$.

 Considering coefficients of $x^{j+1-i}$ on both sides of \autoref{eqn:recurrence_q}, we have
$$
c_{j+1}L_i(j+1) = L_i(j) - b_{j-1}L_{i-2}(j-1).
$$
Multiplying both sides by $c_jc_{j-1}\dots c_1$ gives
\begin{equation}\label{eq:mij1}
    M_i(j+1) = M_i(j) - b_{j-1}c_jM_{i-2}(j-1).
\end{equation}
By the induction hypothesis on $j$ for given $i$, we have 
\begin{equation}
	\label{eqn:mij}
M_i(j) = \begin{cases}
0, & i \equiv 1 \pmod{2}, \\
(-1)^{i/2} \sum_{I \in \mathcal{I}(i/2,j)}R(I), & i \equiv 0 \pmod{2}.
\end{cases}
\end{equation}
By the induction hypothesis on $i-2$ for $j-1\in [i-2,n]$, we have 
\begin{equation}
	\label{eqn:mi2j}
M_{i-2}(j-1) = \begin{cases}
0, & i \equiv 1 \pmod{2}, \\
(-1)^{i/2-1} \sum_{I \in \mathcal{I}(i/2-1,j-1)}R(I), & i \equiv 0 \pmod{2}.
\end{cases}
\end{equation}

Now, we compute $M_i(j+1)$ by \autoref{eq:mij1}, \autoref{eqn:mij} and
\autoref{eqn:mi2j}. When $i$ is odd, it is easy to get $M_i(j+1)=0.$ Assume that
$i$ is even. By the definition of  $\mathcal{I}(k,j)$, $I\in
\mathcal{I}(i/2-1,j-1)$ if and only if $I\cup \{j-1\} \in \mathcal{I}(i/2,j+1)$.
That is, there is a one-to-one correspondence between members in $
\mathcal{I}(i/2-1,j-1)$ and members in $\mathcal{I}(i/2,j+1)$ containing the
element $j-1$. Further by the definition of $R(I)$,   we have
\begin{equation}\label{eq:rij}
    - b_{j-1}c_jM_{i-2}(j-1)=(-1)^{i/2}  b_{j-1}c_j \sum_{I \in \mathcal{I}(i/2-1,j-1)}R(I)=(-1)^{i/2} \sum_{\stackrel{J \in \mathcal{I}(i/2,j+1)}{j-1 \in J}}R(J).
\end{equation}
Note that $\mathcal{I}(i/2,j+1)\setminus \mathcal{I}(i/2,j)$  consists of all members in $\mathcal{I}(i/2,j+1)$ containing the element $j-1$. So by \autoref{eq:mij1} and \autoref{eq:rij}, we have 

$$M_i(j+1)=(-1)^{i/2} \sum_{I \in \mathcal{I}(i/2,j)}R(I)+ (-1)^{i/2} \sum_{\stackrel{J \in \mathcal{I}(i/2,j+1)}{j-1 \in J}}R(J)=(-1)^{i/2} \sum_{J \in \mathcal{I}(i/2,j+1)}R(J).$$
Thus, \autoref{eqn:coefficients}  holds for  $j+1$ and given $i$. The induction 
proof is completed.
\end{proof}

\section{The Smallest Eigenvalue of Cayley Graphs over $\ZZ_4^n$}
\label{sec:Z4}
In this section, we asymptotically determine the smallest eigenvalue of
$\cay(\ZZ_4^n,(r,s,r,s))$, the Cayley graph over $\ZZ_4^n$ generated by vectors
in $\ZZ_4^n$ of type $(r,s,r,s)$ with $n = 2(r+s)$ (see
\autoref{subsec:notations}). For simplicity of notations, we write $G(r,s)
\defeq \cay(\ZZ_4^n,(r,s,r,s))$ and $S \defeq \ZZ_4(r,s,r,s)$.

\subsection{The Expression of Eigenvalues of $G(r,s)$}
\label{subsec:ev4}
In this subsection, we express all the eigenvalues of $G(r,s)$ with weight
enumerators.
 
For any  $\mathbf{v} \in \ZZ_4^n$, the corresponding eigenvalue of $G(r,s)$ is
$\lambda(\mathbf{v}) = \sum_{\mathbf{b} \in S}\zeta_4^{\mathbf{b} \cdot
\mathbf{v}} $ by \autoref{eqn:lambda}. We partition $S$ into four disjoint
subsets,
$$
S(\mathbf{v},a) \defeq \set{\mathbf{b} \in S}{\mathbf{b} \cdot \mathbf{v} = a},~ a\in \ZZ_4.
$$
Note that there is a bijection $S(\mathbf{v},1) \to S(\mathbf{v},3), \;
\mathbf{x} \mapsto -\mathbf{x}.$ So, 
\begin{align}
\lambda(\mathbf{v}) &= \sum_{\mathbf{b} \in S}\zeta_4^{\mathbf{b} \cdot \mathbf{v}} =
\card{S({\mathbf{v}},0)} + \card{S({\mathbf{v}},1)}\zeta_4
+ \card{S({\mathbf{v}},2)}\zeta_4^2 + \card{S({\mathbf{v}},3)}\zeta_4^3 \\
&= \card{S(\mathbf{v},0)} - \card{S(\mathbf{v},2)}=2\card{S(\mathbf{v},0)} - 
\beta({\mathbf{v}}), \label{eqn:ev4}
\end{align}
where $\beta(\mathbf{v}) \defeq \card{S(\mathbf{v},0)} + \card{S(\mathbf{v},2)}$.
We will determine $\beta({\mathbf{v}})$ and $\card{S(\mathbf{v},0)}$ separately.

For $\mathbf{v} = (v_1,\dots,v_n) \in \ZZ_4^n$, we write $\overline{\mathbf{v}}
\defeq (\overline{v_1},\dots,\overline{v_n}) \in \ZZ_2^n$ as the image of
$\mathbf{v}$ through the quotient map $\ZZ_4^n \to \ZZ_2^n$, i.e.,
$\overline{v_i} \defeq v_i \pmod{2} \in \ZZ_2$ for $v_i \in \ZZ_4$.
\begin{lemma}
	\label{lemma:beta_v}
For $\mathbf{v} \in \ZZ_4^n$ with $\mathbf{t}(\mathbf{v}) = (t_0,t_1,t_2,t_3)$, we have
\begin{equation}
\label{eqn:beta_2}
\beta(\mathbf{v}) = \frac{\binom{n}{r,s,r,s}}{\binom{n}{t_0+t_2}}
\qty(\frac{1}{2}(x+y)^n + \frac{1}{2}(x+y)^{2r}(x-y)^{2s})[t_0+t_2,t_1+t_3]. 
\end{equation}
\end{lemma}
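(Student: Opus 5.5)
The plan is to reduce $\beta(\mathbf{v})$ to a binary counting problem via the quotient map $\ZZ_4^n \to \ZZ_2^n$. We then evaluate the binary count using \autoref{lemma:duality} and the MacWilliams identity \autoref{eqn:MacWilliams_2}.

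First, by definition $\beta(\mathbf{v})$ counts those $\mathbf{b} \in S$ with $\mathbf{b}\cdot\mathbf{v} \in \setnd{0,2}$, i.e.\ with $\mathbf{b}\cdot\mathbf{v}$ even. Reducing modulo $2$ commutes with the inner product, so $\mathbf{b}\cdot\mathbf{v} \equiv \overline{\mathbf{b}}\cdot\overline{\mathbf{v}} \pmod 2$. Hence $\beta(\mathbf{v}) = \card{\set{\mathbf{b}\in S}{\overline{\mathbf{b}}\cdot\overline{\mathbf{v}} = 0}}$.

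For $\mathbf{b}$ of type $(r,s,r,s)$, the image $\overline{\mathbf{b}}$ has type $(2r,2s)$: its support is the set of positions carrying $1$ or $3$. Conversely, each $\mathbf{u}\in\ZZ_2(2r,2s)$ has exactly $\binom{2r}{r}\binom{2s}{s}$ preimages in $S$. Indeed, one chooses which $r$ of the $2r$ zero positions of $\mathbf{u}$ receive $0$ (the rest receive $2$), and which $s$ of the $2s$ one positions receive $1$ (the rest receive $3$). Therefore
$$\beta(\mathbf{v}) = \binom{2r}{r}\binom{2s}{s}\, A_{\dual{\mathbf{t}'}}[(2r,2s)],$$
where $\mathbf{t}' = \mathbf{t}(\overline{\mathbf{v}}) = (t_0+t_2,\,t_1+t_3)$. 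Here $A_{\dual{\mathbf{t}'}}[(2r,2s)]$ counts the vectors of type $(2r,2s)$ orthogonal to $\overline{\mathbf{v}}$.

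Next, apply \autoref{lemma:duality} with $p=2$, $\mathbf{s}=(2r,2s)$ and $\mathbf{t}=\mathbf{t}'$. This gives
$$A_{\dual{\mathbf{t}'}}[(2r,2s)] = \frac{\binom{n}{2s}}{\binom{n}{t_0+t_2}}\, A_{\dual{(2r,2s)}}[(t_0+t_2,t_1+t_3)].$$
The code generated by a binary vector of weight $2s$ has two codewords, $\mathbf{0}$ and the vector itself, so its weight enumerator is $x^n + x^{2r}y^{2s}$. By \autoref{eqn:MacWilliams_2}, the enumerator of its dual is
$$\tfrac12(x+y)^n + \tfrac12(x+y)^{2r}(x-y)^{2s}.$$
We then extract the coefficient of $x^{t_0+t_2}y^{t_1+t_3}$.

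Finally, we combine the constants using
$$\binom{2r}{r}\binom{2s}{s}\binom{n}{2s} = \frac{n!}{(2r)!\,(2s)!}\cdot\frac{(2r)!}{r!\,r!}\cdot\frac{(2s)!}{s!\,s!} = \binom{n}{r,s,r,s}.$$
This yields \autoref{eqn:beta_2}.

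The only delicate step is the fiber count together with the correct orientation of the duality lemma, namely which type sits in the binomial on which side. The remaining steps are direct substitutions.
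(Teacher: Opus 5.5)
Your proposal is correct and follows the paper's proof essentially step for step: the fiber count $\binom{2r}{r}\binom{2s}{s}$ under the quotient map $\ZZ_4^n\to\ZZ_2^n$, then \autoref{lemma:duality} with the same orientation, the MacWilliams identity for the code generated by $\overline{\mathbf{b}}$, and the identity $\binom{n}{r,s,r,s}=\binom{n}{2r}\binom{2r}{r}\binom{2s}{s}$. One small nit: when $s=0$ the generated code is $\{\mathbf{0}\}$, which has one codeword rather than two (the paper treats this case separately), but MacWilliams then gives $(x+y)^n$, which equals $\frac12(x+y)^n+\frac12(x+y)^{2r}(x-y)^{0}$, so the stated formula still holds.
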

\begin{proof}
First, we show that 
\begin{equation}
	\label{eqn:beta_1}
\beta(\mathbf{v}) = A_{\dual{\langle\overline{\mathbf{v}}\rangle_2}}(x,y)[2r,2s]
\binom{2r}{r}\binom{2s}{s}.
\end{equation}
For $\mathbf{b} \in S({\mathbf{v}},0) \cup S({\mathbf{v}},2)$, it is clear that
$\overline{\mathbf{b}} \cdot \overline{\mathbf{v}} = 0 \in \ZZ_2$. That is, for
each $\mathbf{b}  \in S({\mathbf{v}},0) \cup S({\mathbf{v}},2)$ with
$\mathbf{t}(\mathbf{b}) = (r,s,r,s)$, we have $\mathbf{t}(\overline{\mathbf{b}})
= (2r,2s)$ and $\overline{\mathbf{b}}$ is a codeword of type $(2r,2s)$ in
$\dual{\langle\overline{\mathbf{v}}\rangle_2}$. Conversely, for  each
$\overline{\mathbf{b}}  \in \dual{\langle\overline{\mathbf{v}}\rangle_2}$  of
type $(2r,2s)$, there are $\binom{2r}{r}\binom{2s}{s}$ of preimages
$\mathbf{b} \in S(\mathbf{v},0) \cup S(\mathbf{v},2)$ of type $(r,s,r,s)$. This
number is obtained by replacing in $\overline{\mathbf{b}}$ half of the $1$ with
$3$, and half of the $0$ with $2$. So \autoref{eqn:beta_1} is established.

As $\mathbf{t}(\overline{\mathbf{v}}) = (t_0+t_2,t_1+t_3)$.
By \autoref{lemma:duality}, we have
$$
\binom{n}{t_0+t_2}A_{\dual{\langle\overline{\mathbf{v}}\rangle_2}}[2r,2s] =
\binom{n}{2r}A_{\dual{(2r,2s)}}[t_0+t_2,t_1+t_3].
$$
For different $s$, we have 
\begin{equation}
	\label{eqn:A2}
A_{{(2r,2s)}}(x,y)  = \begin{cases}
	x^n, & s=0, \\
	x^n + x^{n-2s}y^{2s}, & s\neq 0. 
\end{cases}
\end{equation}
By the MacWilliams identity (\autoref{eqn:MacWilliams_2}),
\begin{equation}
	\label{eqn:temp}
A_{\dual{(2r,2s)}}(x,y) = \frac{1}{2}(x+y)^n + \frac{1}{2}(x+y)^{2r}(x-y)^{2s}.
\end{equation}
Note that $\binom{n}{r,s,r,s} = \binom{n}{2r}\binom{2r}{r}\binom{2s}{s}$. Then
\autoref{eqn:beta_2} follows.
\end{proof}
Next, we give an expression of $\card{S(\mathbf{v},0)}$.

\begin{lemma}
	\label{lemma:Lrv0}
For $\mathbf{v} \in \ZZ_4^n$ with $\mathbf{t}(\mathbf{v}) = (t_0,t_1,t_2,t_3)$, we have
\begin{multline}
\card{S(\mathbf{v},0)} = 
\frac{\binom{n}{r,s,r,s}}{4\binom{n}{t_0,t_1,t_2,t_3}}
\big((x+y+z+w)^n
+ 2((x+z)^2-(y+w)^2)^r((x-z)^2+(y-w)^2)^s \\
+(x+y+z+w)^{2r}(x-y+z-w)^{2s}\big)[t_0,t_1,t_2,t_3].
\end{multline}
\end{lemma}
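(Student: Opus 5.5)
Following the pattern of \autoref{lemma:beta_v}, I would switch the roles of $\mathbf{v}$ and the elements of $S$ using \autoref{lemma:duality}, and then compute the needed dual weight enumerator with the MacWilliams identity \autoref{eqn:MacWilliams_4}.

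First, $\card{S(\mathbf{v},0)}$ is the number of codewords of type $(r,s,r,s)$ in $\dual{\langle\mathbf{v}\rangle_4}$, that is, $A_{\dual{\mathbf{t}}}[r,s,r,s]$ with $\mathbf{t}=\mathbf{t}(\mathbf{v})$. By \autoref{lemma:duality},
$$
\card{S(\mathbf{v},0)} = \frac{\binom{n}{r,s,r,s}}{\binom{n}{t_0,t_1,t_2,t_3}}\, A_{\dual{(r,s,r,s)}}[t_0,t_1,t_2,t_3].
$$
It therefore suffices to show that $A_{\dual{(r,s,r,s)}}$ equals $\tfrac14$ times the bracketed polynomial in the statement.

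Second, fix $\mathbf{u}$ of type $(r,s,r,s)$ and compute the complete weight enumerator of $\langle\mathbf{u}\rangle_4=\{0,\mathbf{u},2\mathbf{u},3\mathbf{u}\}$. The four multiples have the following types: $0\mathbf{u}$ has type $(n,0,0,0)$; $\mathbf{u}$ has type $(r,s,r,s)$; $2\mathbf{u}$ sends $0,2\mapsto 0$ and $1,3\mapsto 2$, so it has type $(2r,0,2s,0)$; $3\mathbf{u}$ swaps $1$ and $3$, so it again has type $(r,s,r,s)$. Hence $\card{\langle\mathbf{u}\rangle_4}=4$ (if $s\ge1$; $s=0$ would make $S$ invalid anyway) and
$$
A_{(r,s,r,s)} = x_0^n + 2x_0^r x_1^s x_2^r x_3^s + x_0^{2r}x_2^{2s}.
$$

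Third, apply \autoref{eqn:MacWilliams_4} with variables $x,y,z,w$, making the substitutions $X_0=x+y+z+w$, $X_1=x+\zeta_4 y-z-\zeta_4 w$, $X_2=x-y+z-w$, $X_3=x-\zeta_4 y-z+\zeta_4 w$.
\begin{itemize}
\item The term $x_0^n$ gives $(x+y+z+w)^n$.
\item The term $x_0^{2r}x_2^{2s}$ gives $(x+y+z+w)^{2r}(x-y+z-w)^{2s}$.
\item For the middle term, $X_0X_2=(x+z)^2-(y+w)^2$, and $X_1X_3=((x-z)+\zeta_4(y-w))((x-z)-\zeta_4(y-w))=(x-z)^2+(y-w)^2$. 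So $2X_0^rX_1^sX_2^rX_3^s=2((x+z)^2-(y+w)^2)^r((x-z)^2+(y-w)^2)^s$.
\end{itemize}
Dividing by $4$ gives exactly the polynomial in the statement. Substituting into the first display completes the proof.

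The only delicate point is getting the types of $2\mathbf{u}$ and $3\mathbf{u}$ right and grouping $X_1X_3$ as a conjugate pair. No real obstacle is expected.
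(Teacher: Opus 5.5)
Your proposal is correct and is essentially the paper's own proof. Both use \autoref{lemma:duality} to swap the roles of $\mathbf{v}$ and $S$, the enumerator $x^n+2x^ry^sz^rw^s+x^{2r}z^{2s}$ of $\langle\mathbf{u}\rangle_4$, and \autoref{eqn:MacWilliams_4} with the pairings $X_0X_2$ and $X_1X_3$.

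One small correction concerns the case $s=0$. It does not make $S$ invalid, since $\ZZ_4(r,0,r,0)$ avoids $\mathbf{0}$ and is closed under negation. The case is harmless, though: then $\langle\mathbf{u}\rangle_4=\{\mathbf{0},\mathbf{u}\}$, and MacWilliams with $\card{C}=2$ gives $\frac12\bigl(X_0^n+X_0^{n/2}X_2^{n/2}\bigr)$. This coincides with the stated formula, because its last summand equals $(x+y+z+w)^n$ when $s=0$.
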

\begin{proof}
By definition and \autoref{lemma:duality}, 
$$
\card{S(\mathbf{v},0)} = A_{\dual{(t_0,t_1,t_2,t_3)}}[r,s,r,s]
= \frac{\binom{n}{r,s,r,s}}{\binom{n}{t_0,t_1,t_2,t_3}}
A_{\dual{(r,s,r,s)}}[t_0,t_1,t_2,t_3].
$$
Note that 
$$
A_{(r,s,r,s)}(x,y,z,w) = \begin{cases}
x^n + 2x^ry^sz^rw^s + x^{2r}z^{2s}, & s \neq 0, \\
x^n + x^{n/2}z^{n/2}, & s = 0.
\end{cases}
$$
By the MacWilliams identity (\autoref{eqn:MacWilliams_4}),
\begin{multline}
	\label{eqn:Lrv0_1}
A_{\dual{(r,s,r,s)}}(x,y,z,w) = \frac{1}{4}(x+y+z+w)^n
+ \frac{1}{2}((x+z)^2-(y+w)^2)^r((x-z)^2+(y-w)^2)^s \\
+\frac{1}{4}(x+y+z+w)^{2r}(x-y+z-w)^{2s}.
\end{multline}
\end{proof}

Now  we  are able to  give an expression of $\lambda(\mathbf{v})$.

\begin{theorem}
	\label{theorem:eigenvalue_4}
Let $n$ be even, $r \in [0,n/2]$ and $s = n/2-r$. For $\mathbf{v} \in \ZZ_4^{n}$
with $\mathbf{t}(\mathbf{v}) = (t_0,t_1,t_2,t_3)$, the eigenvalue
$\lambda(\mathbf{v})$ of $G(r,s) = \cay(\ZZ_4^n,(r,s,r,s))$ is given by
\begin{equation}
\label{eqn:lambda_v_43}
\lambda(\mathbf{v}) = \frac{\binom{n}{r,s,r,s}}{\binom{n}{t_0,t_1,t_2,t_3}}
\qty(((x+z)^2-(y+w)^2)^r((x-z)^2+(y-w)^2)^s)[t_0,t_1,t_2,t_3].
\end{equation}
\end{theorem}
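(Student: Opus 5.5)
We start from \autoref{eqn:ev4}, $\lambda(\mathbf{v}) = 2\card{S(\mathbf{v},0)} - \beta(\mathbf{v})$, and substitute the expressions from \autoref{lemma:Lrv0} and \autoref{lemma:beta_v}. From \autoref{lemma:Lrv0}, $2\card{S(\mathbf{v},0)}$ equals
$$\frac{\binom{n}{r,s,r,s}}{\binom{n}{t_0,t_1,t_2,t_3}}\Big(\tfrac12 F + P + \tfrac12 Q\Big)[t_0,t_1,t_2,t_3],$$
where we write
$$F=(x+y+z+w)^n,\qquad P=((x+z)^2-(y+w)^2)^r((x-z)^2+(y-w)^2)^s,\qquad Q=(x+y+z+w)^{2r}(x-y+z-w)^{2s}.$$
The $P$ term is exactly the claimed expression. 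It therefore suffices to show that
$$\frac{1}{\binom{n}{t_0,t_1,t_2,t_3}}\Big(\tfrac12F+\tfrac12Q\Big)[t_0,t_1,t_2,t_3]=\frac{1}{\binom{n}{t_0+t_2}}\Big(\tfrac12(x+y)^n+\tfrac12(x+y)^{2r}(x-y)^{2s}\Big)[t_0+t_2,t_1+t_3].$$
Once this holds, these two contributions cancel against $\beta(\mathbf{v})$.

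The key observation is that $F$ and $Q$ depend on $(x,y,z,w)$ only through $X=x+z$ and $Y=y+w$: indeed $F=(X+Y)^n$ and $Q=(X+Y)^{2r}(X-Y)^{2s}$. So let $g(X,Y)$ be a homogeneous polynomial of degree $n$, and write $g=\sum_a g_a X^aY^{n-a}$. Expanding $(x+z)^a(y+w)^{n-a}$ by the binomial theorem, the coefficient of $x^{t_0}y^{t_1}z^{t_2}w^{t_3}$ is
$$g_{t_0+t_2}\binom{t_0+t_2}{t_0}\binom{t_1+t_3}{t_1}.$$
Next use the identity
$$\binom{n}{t_0,t_1,t_2,t_3}=\binom{n}{t_0+t_2}\binom{t_0+t_2}{t_0}\binom{t_1+t_3}{t_1}.$$
Dividing by it gives
$$\frac{g[t_0,t_1,t_2,t_3]}{\binom{n}{t_0,t_1,t_2,t_3}}=\frac{g(x,y)[t_0+t_2,t_1+t_3]}{\binom{n}{t_0+t_2}}.$$
Applying this with $g=\tfrac12F+\tfrac12Q$ yields the required equality. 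Linearity of coefficient extraction then finishes the proof.

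There is no real obstacle here. The only point needing care is the degenerate case $s=0$, where $A_{(r,s,r,s)}$ has a different form. In that case one should check that \autoref{lemma:Lrv0} and \autoref{lemma:beta_v} still hold as stated. For $s=0$ we have $n=2r$, so $Q=F$, and the formulas $x^n+x^{n/2}z^{n/2}$ and $x^n$ give the matching MacWilliams transforms. I would therefore verify that case briefly, or simply note that both lemmas as stated already cover it.
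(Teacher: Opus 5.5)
Your proof is correct and follows the paper's route: you combine \autoref{lemma:Lrv0} and \autoref{lemma:beta_v} via \autoref{eqn:ev4} and reduce everything to the coefficient identity \autoref{eqn:lambda_v_4_suffice}. The only difference is in how you prove that identity. You observe that $(x+y+z+w)^{2r}(x-y+z-w)^{2s}$ and $(x+y+z+w)^n$ are polynomials in $x+z$ and $y+w$ and apply the binomial theorem, which is a cleaner version of the paper's counting argument through the quotient map $\ZZ_4^n\to\ZZ_2^n$; it also handles the $(x+y+z+w)^n$ term uniformly, whereas the paper cancels that term against $\tfrac12(x+y)^n$ without comment.
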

\begin{proof}
By \autoref{lemma:beta_v}, \autoref{lemma:Lrv0} and \autoref{eqn:ev4},
\begin{align}
\lambda(\mathbf{v}) &= 2\card{S(\mathbf{v},0)} - \beta(\mathbf{v}) \notag \\
&= \frac{\binom{n}{r,s,r,s}}{\binom{n}{t_0,t_1,t_2,t_3}}\big(
	((x+z)^2-(y+w)^2)^r((x-z)^2+(y-w)^2)^s \notag \\
& \quad \quad \quad \quad \quad \quad + \frac{1}{2}(x+y+z+w)^{2r}(x-y+z-w)^{2s}
\big)[t_0,t_1,t_2,t_3]  \notag \\
& \quad- \frac{\binom{n}{r,s,r,s}}{2\binom{n}{t_0+t_2}}\qty(
	(x+y)^{2r}(x-y)^{2s})[t_0+t_2,t_1+t_3]  \label{eqn:lambda_v_42}.
\end{align}
We show that \autoref{eqn:lambda_v_42} and \autoref{eqn:lambda_v_43} are equal in the following.

Let $f(x,y,z,w) = (x+y+z+w)^{2r}(x-y+z-w)^{2s}$ and $g(x,y) = (x+y)^{2r}(x-y)^{2s}$.
Note that $\binom{n}{t_0,t_1,t_2,t_3} = \binom{n}{t_0+t_2}\binom{t_0+t_2}{t_0}
\binom{t_1+t_3}{t_1}$. Then, it suffices to show that
\begin{equation}
\label{eqn:lambda_v_4_suffice}	
f[t_0,t_1,t_2,t_3] = \binom{t_0+t_2}{t_0}\binom{t_1+t_3}{t_1}g[t_0+t_2,t_1+t_3].
\end{equation}

We first compute $f[t_0,t_1,t_2,t_3]$. Since there are $2r$ terms $(x+y+z+w)$
and $2s$ terms $(x-y+z-w)$ in $f$, we use a pair of vectors $(\mathbf{u},
\mathbf{w}) \in \ZZ_4^{2r} \times \ZZ_4^{2s}= \ZZ_4^{n}$ to indicate which of
$\{x,y,z,w\}$ is chosen from each term by a one-to-one correspondence between
$\ZZ_4$ and $\{x,y,z,w\}$. For example,  $u_i=0$ means that $x$ is chosen from
the $i$-th $(x+y+z+w)$ and $w_j=1$ means that $-y$ is chosen from the $j$-th
$(x-y+z-w)$. Note that a choice mode contributes to $f[t_0,t_1,t_2,t_3]$ if and
only if the vector $(\mathbf{u}, \mathbf{w}) \in  \ZZ_4^{n}$ is of type
$(t_0,t_1,t_2,t_3)$. So
\begin{equation}
\label{eqn:tmp_4_1}
f[t_0,t_1,t_2,t_3] = \sum_{\mathbf{u},\mathbf{w}} (-1)^{\wt_1(\mathbf{w})+\wt_3(\mathbf{w})},
\end{equation}
where the summand $(\mathbf{u},\mathbf{w})$ runs over $\ZZ_4(t_0,t_1,t_2,t_3)$.

Similarly for $g[t_0+t_2,t_1+t_3]$, we use a vector  $(\mathbf{x},\mathbf{y}) \in \ZZ_2^{2r} \times
\ZZ_2^{2s} = \ZZ_2^{n}$ to indicate a way of choices of  $x,y$ 
from each factor of $g(x,y)$. Then
\begin{equation}
\label{eqn:tmp_4_2}
g[t_0+t_2,t_1+t_3] = \sum_{\mathbf{x},\mathbf{y}} (-1)^{\wt(\mathbf{y})},
\end{equation}
where the summand $(\mathbf{x},\mathbf{y})$ runs over $\ZZ_2(t_0+t_2,t_1+t_3)$.

Finally, note that the quotient map $\ZZ_4^n \to \ZZ_2^n$ restricts to
$\ZZ_4(t_0,t_1,t_2,t_3) \to \ZZ_2(t_0+t_2,t_1+t_3)$, and each element in
$\ZZ_2(t_0+t_2,t_1+t_3)$ has $\binom{t_0+t_2}{t_0}\binom{t_1+t_3}{t_1}$
preimages. Moreover, $(-1)^{\wt_1(\mathbf{w}) + \wt_3(\mathbf{w})} =
(-1)^{\wt(\mathbf{y})}$ when $\mathbf{w}$ is a preimage of $\mathbf{y}$. So we
conclude that \autoref{eqn:lambda_v_4_suffice} is true.
\end{proof}

\subsection{The Smallest Eigenvalue of $G(r,s)$}
\label{subsec:smallest_ev4}
In this section, we asymptotically determine the smallest eigenvalue of $G(r,s)
= \cay(\ZZ_4^n,(r,s,r,s))$ with $n = 2(r+s)$ for even $r \ge s$ in suitable
regimes.

First, we list some observations and notations that are convenient for the
discussion. By \autoref{theorem:eigenvalue_4}, for $\mathbf{v} \in \ZZ_4^n$ with
$\mathbf{t}(\mathbf{v}) = (t_0,t_1,t_2,t_3)$, the eigenvalue
$\lambda(\mathbf{v})$ of $G(r,s)$ depends only on the type of $\mathbf{v}$, so
we write $\lambda(t_0,t_1,t_2,t_3) \defeq \lambda(\mathbf{v})$. As the
polynomial in \autoref{eqn:lambda_v_43} for the expression of
$\lambda(\mathbf{v})$ is symmetric with respect to the transposition of $x,z$
and the transposition of $y,w$, we know that $\lambda(t_0,t_1,t_2,t_3)$ is
independent of the order of $t_0,t_2$ and the order of $t_1,t_3$. So, we may
assume $t_0 \le t_2$ and $t_1 \le t_3$. For $\mathbf{b} \in S \defeq
\ZZ_4(r,s,r,s)$, $\mathbf{1} \cdot \mathbf{b} = 2r \in \ZZ_4$, where $\mathbf{1}
\in \ZZ_4^n$ is the all-one vector. Thus,
\begin{equation}
	\label{eqn:cyclic_invariant_4}
\lambda(\mathbf{v} + \mathbf{1}) = \sum_{\mathbf{b} \in S}
\zeta_4^{(\mathbf{v}+\mathbf{1}) \cdot \mathbf{b}} = (-1)^r\lambda(\mathbf{v}).
\end{equation}
As $\mathbf{t}(\mathbf{v} + \mathbf{1}) = (t_3,t_0,t_1,t_2)$, if $r$ is even, we have
$\lambda(t_0,t_1,t_2,t_3) = \lambda(t_3,t_0,t_1,t_2)$, and we can further assume
that $t_0 \le t_1$. Note that the largest eigenvalue of $G(r,s)$ is
$\lambda_{\max}(G(r,s)) = \lambda(0,0,0,n) = \binom{n}{r,s,r,s}$. 

In summary, we can always assume that $t_0 \le t_2$, $t_0\leq t_1 \le t_3$, and
$(t_0,t_1,t_2,t_3) \notin \{(0,0,0,n),$ $(0,0,n,0)\}$ in the following.

\begin{theorem}
	\label{theorem:smallest_eigenvalue_4}
Let $n = 2(r+s)$ with $r \ge s$ and $r$ be even.
The smallest eigenvalue of $G(r,s)$ is given by
\begin{equation}
\label{eqn:smallest_ev_expr}	
\lambda_{\min}(G(r,s)) = \lambda(0,1,n-2,1) = -\frac{\binom{n}{r,s,r,s}}{n-1},
\end{equation}
in both of the following two regimes
\begin{enumerate}
	\item[(R1).] $s \ge 2$ is fixed and $n$ is large enough, i.e. $s = O(1)$;
	\item[(R2).] $s=\alpha \cdot \frac{n}{2}$ and $n$ is large enough, where $0
	< \alpha \le 0.17$.
\end{enumerate}
\end{theorem}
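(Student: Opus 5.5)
We need a plan for proving that $\lambda_{\min} = \lambda(0,1,n-2,1) = -\binom{n}{r,s,r,s}/(n-1)$.

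First, compute $\lambda(0,1,n-2,1)$ directly from \autoref{theorem:eigenvalue_4}. Put $N=\binom{n}{r,s,r,s}$. The coefficient of $y z^{n-2} w$ in $P\defeq((x+z)^2-(y+w)^2)^r((x-z)^2+(y-w)^2)^s$ comes from setting $x=0$. Then $P$ becomes $(z^2-(y+w)^2)^r(z^2+(y-w)^2)^s$, and we need the part that is linear in both $y$ and $w$.

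The first factor contributes $-r z^{2r-2}\cdot 2yw$ and the second contributes $+s z^{2s-2}\cdot(-2yw)$. So the coefficient is $-2(r+s)=-n$. Since $\binom{n}{0,1,n-2,1}=n(n-1)$, this gives $\lambda=-N/(n-1)$, as claimed. It remains to show every other admissible type $(t_0,t_1,t_2,t_3)$ satisfies $\lambda(\mathbf{t})\ge -N/(n-1)$. Throughout we use the reductions $t_0\le t_2$ and $t_0\le t_1\le t_3$ from the discussion before the statement.

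The strategy is to split types by their reduction mod $2$. Set $w=t_1+t_3$, the weight of $\overline{\mathbf v}$. From \autoref{eqn:ev4} and \autoref{lemma:beta_v}, $\lambda(\mathbf v)\ge -\beta(\mathbf v)$, and $\beta$ depends only on $w$. Its bracket is $\tfrac12\binom{n}{w}+\tfrac12 K_{w}(2s)$, with $K$ the length-$n$ Krawtchouk polynomial. Duality \autoref{eqn:duality_K} gives $\beta = \frac{N}{2\binom{n}{2s}}\bigl(\binom{n}{2s}+K_{2s}(w)\bigr)$.

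This bound is only useful when $\beta$ is small, so a sharper route is preferable. Rewrite $\lambda(\mathbf v)=|S(\mathbf v,0)|-|S(\mathbf v,2)|$. Conditioning on $\overline{\mathbf b}$, express $\lambda$ as an average over $\ZZ_4$-lifts. Concretely, I would compare $\lambda(\mathbf v)$ to the eigenvalue $K_{2s}(w)$ of $H(n,2s)$ pulled back along the homomorphism $G(r,s)\to H(n,2s)$.

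For types with $t_0+t_2$ even (or specific parities), one gets $\lambda(\mathbf v)=\frac{N}{\binom{n}{2s}}K_{2s}(w)\cdot c$ up to controllable corrections. Types with $\overline{\mathbf v}$ of weight $w\notin\{0,n\}$ then give $|\lambda|\le \frac{N}{\binom{n}{2s}}\max_w|K_{2s}(w)|$. By \autoref{corollary:lb_Hamming} this is $N\cdot O(n^{s})/\binom{n}{2s}=o(N/n)$ in (R1) when $s\ge 2$. In (R2) the ratio is exponentially small, precisely because \autoref{eqn:region_alpha} holds for $\alpha\le0.17$.

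Hence the remaining candidates are types with $w\in\{0,n\}$ or $w$ extremely close to them, where $K_{2s}(w)\approx\pm\binom{n}{2s}$. By the reduction $t_1\le t_3$, $w=n$ with $t_0=t_2=0$ reduces to $w$ small in the shifted picture. So we are left with types where $t_1+t_3$ is bounded, or symmetrically $t_0+t_2$ is bounded, say $\le C$.

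For these near-extremal types, I would directly expand the coefficient in \autoref{eqn:lambda_v_43}. Setting the few small exponents means extracting low-order terms in $y,w$ (or $x$) from $P$, just as in the computation of $\lambda(0,1,n-2,1)$. This yields $\lambda(\mathbf t)=N\cdot\rho(\mathbf t)$, where $\rho(\mathbf t)$ is an explicit rational function of order $n^{-(t_0+t_1+t_3)}$ roughly.

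The case analysis then consists of the following checks:
\begin{itemize}
\item $(0,0,n-1,1)$ has $\lambda=0$ or a positive value.
\item $(0,1,n-2,1)$ gives exactly $-N/(n-1)$.
\item $(0,0,n-2,2)$ gives a value of order $+N/n$ or smaller in absolute value than $N/(n-1)$.
\item Types with $t_0+t_1+t_3\ge3$ give $O(N/n^{3/2})$.
\end{itemize}

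The main obstacle is making the middle step rigorous. This requires an exact identity relating $\lambda(\mathbf v)$ to $K_{2s}(w)$, with error terms uniform in all types. It also requires a clean bound on $\max_w|K_{2s}(w)|$ over the intermediate range of $w$, where the trivial bound $\binom{n}{2s}$ must be beaten by more than a factor $n$. I would try first to show $|\lambda(\mathbf v)|\le \frac{N}{\binom{n}{2s}}\max_{x}|q_{2s}(x)|$ via the factorization of $P$ as a product of a $((x+z)^2-(y+w)^2)^r$ part and an $H(n,2s)$-like part, invoking \autoref{theorem:lb_q} and \autoref{theorem:lb_q_theta}.
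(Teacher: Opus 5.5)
Your evaluation $\lambda(0,1,n-2,1)=-N/(n-1)$ is correct. The rest of the plan, however, rests on a relation that does not exist, and it applies the two available tools to the wrong families of types.

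There is no identity $\lambda(\mathbf v)\approx c\,\frac{N}{\binom{n}{2s}}K_{2s}(w)$ with $w=t_1+t_3$. In fact $\lambda$ is not even a function of $w$: the types $(0,1,n-2,1)$ and $(0,0,n-2,2)$ both have $w=2$, yet they give $-\frac{N}{n-1}$ and $-\frac{r-s}{r+s}\cdot\frac{N}{n-1}$ respectively. Pulling back eigenvectors of $H(n,2s)$ along $\mathbf v\mapsto\overline{\mathbf v}$ produces only the characters $\chi_{2\mathbf u}$. These are the types with $t_1=t_3=0$, or, after $\mathbf v\mapsto\mathbf v+\mathbf 1$, with $t_0=t_2=0$. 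For them the Krawtchouk argument is $t_2$ (resp.\ $t_3$), not $w$.

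The minimizer itself refutes the proposed bound $|\lambda|\le\frac{N}{\binom{n}{2s}}\max_{r_0\le x\le r_1}|q_{2s}(x)|=o(N/n)$ for types with $w\notin\{0,n\}$, since it has $w=2$. Your closing case list is also false. The type $(1,1,n-3,1)$ has $t_0+t_1+t_3=3$, but it gives $\lambda=-\frac{r-s}{r+s}\cdot\frac{N}{n-1}=-(1-\frac{4s}{n})\frac{N}{n-1}$. That is $\Theta(N/n)$ in both regimes, not $O(N/n^{3/2})$. Likewise $(0,1,0,n-1)$ gives $+\frac{r-s}{r+s}N$.

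The split that works is the reverse of yours, and it is what the paper does.

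\emph{Mixed types} ($t_0+t_2\ne0\ne t_1+t_3$). No Hamming input is needed here. Bound each factor of the polynomial in \autoref{eqn:lambda_v_43} coefficientwise by $(x+z)^2+(y+w)^2$. This gives $\lambda=0$ when $t_0+t_2$ is odd, and $|\lambda(\mathbf t)|\le N\binom{n/2}{m}/\binom{n}{2m}$ when $t_0+t_2=2m$. Then $\binom{n}{2m}\ge\binom{n/2}{m}^2\ge(n-1)\binom{n/2}{m}$ for $2\le m\le n/2-2$ and $n\ge10$, with equality $\binom{n}{2m}=(n-1)\binom{n/2}{m}$ for $m\in\{1,n/2-1\}$. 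This is \autoref{lemma:abs_ub_4}.

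\emph{Pure types} ($t_0=t_2=0$). The Hamming bound is needed exactly here. Since $t_1$ ranges over all of $[1,n/2]$, ``extracting low-order terms'' cannot work. Instead, setting $x=z=0$ leaves $(-1)^r(y+w)^{2r}(y-w)^{2s}$, so $\lambda(0,t_1,0,t_3)=(-1)^r\frac{N}{\binom{n}{2s}}K_{2s}(t_3)$ by \autoref{eqn:generating_function_K} and \autoref{eqn:duality_K}. For even $r$ you then need $-K_{2s}(t_3)\le|\lambda_{\min}(H(n,2s))|<\binom{n}{2s}/(n-1)$. This is what \autoref{corollary:lb_Hamming} supplies in (R1) and (R2), as in \autoref{lemma:ev_from_Z2}. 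Types with $t_1=t_3=0$ reduce to this case through $\lambda(\mathbf v+\mathbf 1)=(-1)^r\lambda(\mathbf v)$, which is where $r$ even is used.
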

The proof of Theorem~\ref{theorem:smallest_eigenvalue_4} will be given in the following steps. 
\begin{itemize}
\item The expression for $\lambda(0,1,n-2,1)$ in
Eq.~\eqref{eqn:smallest_ev_expr} is verified in \autoref{lemma:smallest_ev_expr}.
\item When $t_0+t_2 = 0$ or $t_1+t_3 = 0$, the result $\lambda(t_0,t_1,t_2,t_3)
 > \lambda(0,1,n-2,1)$ is proved in \autoref{lemma:ev_from_Z2} for the two regimes
 (R1) and (R2).
\item When $t_0+t_2  \neq 0$ and $t_1+t_3  \neq 0$, the result
$\abs{\lambda(t_0,t_1,t_2,t_3)} \le \abs{\lambda(0,1,n-2,1)}$ is proved in
\autoref{lemma:abs_ub_4}.
\end{itemize}

\begin{lemma}
	\label{lemma:smallest_ev_expr}
Let $n = 2(r+s)$ with $r \ge s$. For $G(r,s)$, $\lambda(0,1,n-2,1) =
-\frac{\binom{n}{r,s,r,s}}{n-1}.$
\end{lemma}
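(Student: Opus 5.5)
The plan is to evaluate the closed formula of \autoref{theorem:eigenvalue_4} directly at the type $(t_0,t_1,t_2,t_3)=(0,1,n-2,1)$. Write
$$P(x,y,z,w)=\bigl((x+z)^2-(y+w)^2\bigr)^r\bigl((x-z)^2+(y-w)^2\bigr)^s .$$
By \autoref{theorem:eigenvalue_4},
$$\lambda(0,1,n-2,1)=\frac{\binom{n}{r,s,r,s}}{\binom{n}{0,1,n-2,1}}\,P[0,1,n-2,1].$$
This leaves two quantities to compute: the multinomial coefficient in the denominator, and the coefficient of $y\,z^{n-2}w$ in $P$.

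The denominator is immediate: $\binom{n}{0,1,n-2,1}=\frac{n!}{(n-2)!}=n(n-1)$.

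For the coefficient, the exponent of $x$ is $0$, so we may set $x=0$ first. This gives
$$P(0,y,z,w)=\bigl(z^2-(y+w)^2\bigr)^r\bigl(z^2+(y-w)^2\bigr)^s .$$
We need the part of total degree $2$ in $y,w$, and it arises in exactly two ways.
\begin{enumerate}
\item The term $-(y+w)^2$ is chosen in exactly one of the $r$ first factors and $z^2$ everywhere else. This contributes $-r(y+w)^2z^{n-2}$, whose $yw$-coefficient is $-2r$.
\item The term $(y-w)^2$ is chosen in exactly one of the $s$ second factors and $z^2$ everywhere else. This contributes $s(y-w)^2z^{n-2}$, whose $yw$-coefficient is $-2s$.
\end{enumerate}
Adding these, $P[0,1,n-2,1]=-2(r+s)=-n$, using $n=2(r+s)$.

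Combining the two computations gives
$$\lambda(0,1,n-2,1)=\binom{n}{r,s,r,s}\cdot\frac{-n}{n(n-1)}=-\frac{\binom{n}{r,s,r,s}}{n-1}.$$
The hypothesis $r\ge s$ plays no role in this computation. There is no real obstacle; the only care needed is to check that no other choice of terms produces the monomial $y\,z^{n-2}w$. This holds because every factor of $P(0,y,z,w)$ is homogeneous of degree $2$, and the total $y,w$-degree must be exactly $2$, so the $y,w$-part must come from exactly one factor.
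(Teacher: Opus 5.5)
Your proposal is correct and follows essentially the same route as the paper: apply the closed formula from Theorem~\ref{theorem:eigenvalue_4} at type $(0,1,n-2,1)$, note that with no $x$ and only one $y$ and one $w$ each factor effectively reduces to $z^2-2yw$, so the coefficient is $-2(r+s)=-n$, and divide by $\binom{n}{0,1,n-2,1}=n(n-1)$. Your justification that exactly one factor supplies the $(y,w)$-part (each factor restricted to $x=0$ is $z^2$ plus a quadratic form in $y,w$) is the same observation the paper makes when it discards terms containing $x$, $y^2$, or $w^2$.
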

\begin{proof}
By \autoref{theorem:eigenvalue_4},
\begin{align*}
\lambda(0,1,n-2,1)
&= \frac{\binom{n}{r,s,r,s}}{\binom{n}{0,1,n-2,1}}
\qty(((x+z)^2 - (y+w)^2)^r((x-z)^2+(y-w)^2)^s)[0,1,n-2,1] \\
&= 	\frac{\binom{n}{r,s,r,s}}{\binom{n}{0,1,n-2,1}}
\qty((z^2-2yw)^r(z^2-2yw)^s)[0,1,n-2,1] \\
&= \frac{\binom{n}{r,s,r,s}}{\binom{n}{0,1,n-2,1}}(-2(r+s)) \\
&= -\frac{\binom{n}{r,s,r,s}}{n-1},
\end{align*}
where the second equality follows from the fact that we cannot choose any terms
containing $x$, $y^2$ or $w^2$ to get the corresponding coefficient.
\end{proof}

 In \autoref{lemma:ev_from_Z2}, we only prove the case where $t_0+t_2 = 0$. The
 case  $t_1+t_3 = 0$ is a consequence of \autoref{lemma:ev_from_Z2} by
 $\lambda(t_0,t_1,t_2,t_3) = \lambda(t_3,t_0,t_1,t_2)$.

\begin{lemma}
	\label{lemma:ev_from_Z2}
Let $n = 2(r+s) = t_1+t_3$ with $t_1\leq t_3$. Then, the eigenvalue $\lambda(0,t_1,0,t_3)$ of $G(r,s)$
is given by
$$
\lambda(0,t_1,0,t_3) = (-1)^r\frac{\binom{n}{r,s,r,s}}{\binom{n}{2s}} K_{2s}(t_3).
$$
In particular, $\lambda(t_0,t_1,t_2,t_3) > \lambda(0,1,n-2,1)$ when $r$ is even in both regimes (R1) and (R2).
\end{lemma}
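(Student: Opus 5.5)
Our plan has two parts: first derive the closed form from \autoref{theorem:eigenvalue_4}, then compare it with $\lambda(0,1,n-2,1)$ using the bounds of \autoref{sec:lb_Hamming}.

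\emph{The closed form.} Set $x=z=0$ in the polynomial of \autoref{eqn:lambda_v_43}. It becomes
\[
(-(y+w)^2)^r\,((y-w)^2)^s=(-1)^r(y+w)^{2r}(y-w)^{2s}.
\]
We need its coefficient of $y^{t_1}w^{t_3}$. By \autoref{eqn:generating_function_K} with $n=2r+2s$, the coefficient of $y^{n-2s}w^{2s}$ in $(y+w)^{n-x}(y-w)^x$ is $K_{2s}(x)$. Reading it instead with the exponent $2s$ on the factor $(y-w)$ and extracting $y^{t_1}w^{t_3}$ is the dual situation. We will use the standard symmetry identity
\[
\binom{n}{t_3}K_{2s}(t_3)=\binom{n}{2s}K_{t_3}(2s),
\]
which is \autoref{eqn:duality_K}. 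Note that $K_{t_3}(2s)$ is exactly $\bigl((y+w)^{n-2s}(y-w)^{2s}\bigr)[y^{n-t_3}w^{t_3}]$, i.e.\ our coefficient. Dividing by $\binom{n}{0,t_1,0,t_3}=\binom{n}{t_3}$ then gives
\[
\lambda(0,t_1,0,t_3)=(-1)^r\frac{\binom{n}{r,s,r,s}}{\binom{n}{2s}}K_{2s}(t_3).
\]
This step is routine bookkeeping.

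\emph{The comparison.} Let $r$ be even. The quantity $K_{2s}(t_3)$ is an eigenvalue of $H(n,2s)$, so $K_{2s}(t_3)\ge\lambda_{\min}(H(n,2s))$. It therefore suffices to show
\[
|\lambda_{\min}(H(n,2s))|<\binom{n}{2s}/(n-1),
\]
and it is enough to do this strictly, since that also gives the strict inequality.
\begin{itemize}
\item In regime (R1), $j=2s$ is a fixed even number with $j<n/2$. \autoref{corollary:lb_Hamming} gives $|\lambda_{\min}|=O(n^{s})$, while $\binom{n}{2s}/(n-1)=\Theta(n^{2s-1})$. Since $s\ge 2$ we have $2s-1>s$, so the inequality holds for large $n$. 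This is exactly where $s\ge 2$ is needed: for $s=1$ both sides are of order $n$.
\item In regime (R2), $j=2s=\alpha n$ with $0<\alpha\le 0.17$. \autoref{corollary:lb_Hamming} bounds $|\lambda_{\min}|$ by $\bigl(e(1+\sqrt2)\sqrt{(1-\alpha)/\alpha}\bigr)^{\alpha n}$ up to a constant factor. On the other side, $\binom{n}{\alpha n}/(n-1)=2^{nh(\alpha)}/\mathrm{poly}(n)$. The strict exponential gap \autoref{eqn:region_alpha} holds for $\alpha\le 0.17$, so it absorbs the polynomial factor.
\end{itemize}

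\emph{Main obstacle.} The delicate point is the precise use of \autoref{eqn:region_alpha}. At $\alpha=0.17$ the exponent gap $h(\alpha)-\alpha\log_2(\cdots)$ must be strictly positive, and we need the $o(1)$ and Stirling terms to be dominated. We will check numerically that the gap at $0.17$ is a positive constant. We will also check that the gap stays positive on $(0,0.17]$, where it tends to $0$ only as $\alpha\to 0$; for fixed $\alpha$ this is harmless. Finally, the case $t_1+t_3=0$ follows as the paper indicates, via $\lambda(t_0,t_1,t_2,t_3)=\lambda(t_3,t_0,t_1,t_2)$.
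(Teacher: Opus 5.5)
Your proposal is correct and follows the paper's proof essentially step for step. You set $x=z=0$ in \autoref{eqn:lambda_v_43}, identify the coefficient as $K_{t_3}(2s)$, convert it via \autoref{eqn:duality_K}, and then compare $|\lambda_{\min}(H(n,2s))|$ from \autoref{corollary:lb_Hamming} against $\binom{n}{2s}/(n-1)$ in (R1) and (R2); the only cosmetic difference is that you use $K_{2s}(t_3)\ge\lambda_{\min}(H(n,2s))$ directly instead of the paper's split on the sign of $K_{2s}(t_3)$, which amounts to the same argument.
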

\begin{proof}
By \autoref{theorem:eigenvalue_4},
\begin{align}
&\lambda(0,t_1,0,t_3) \notag\\
&= \frac{\binom{n}{r,s,r,s}}{\binom{n}{0,t_1,0,t_3}}
\qty(((x+z)^2-(y+w)^2)^r((x-z)^2+(y-w)^2)^s)[0,t_1,0,t_3] \label{eqn:temp40}\\
&= (-1)^r\frac{\binom{n}{r,s,r,s}}{\binom{n}{0,t_1,0,t_3}}
\qty((y+w)^{2r}(y-w)^{2s})[0,t_1,0,t_3]
\label{eqn:temp41}  \\
&= (-1)^r\frac{\binom{n}{r,s,r,s}}{\binom{n}{t_3}}K_{t_3}(2s) \label{eqn:temp42}\\
&= (-1)^r\frac{\binom{n}{r,s,r,s}}{\binom{n}{2s}} K_{2s}(t_3). \label{eqn:temp43}
\end{align}
We explain how these equations are derived. \autoref{eqn:temp41} follows from
the fact that we cannot choose any terms containing $x$ or $z$, to get the
corresponding term $y^{t_1}w^{t_3}$. 
\autoref{eqn:temp42} and \autoref{eqn:temp43} follow
from \autoref{eqn:generating_function_K} and \autoref{eqn:duality_K}.

In the following, assume $r$ is even. Note that $\lambda(0,t_1,0,t_3) >
\lambda(0,1,n-2,1)$ is equivalent to 
\begin{equation}
\label{eqn:ev_from_Z2_1}	
-K_{2s}(t_3) < \frac{\binom{n}{2s}}{n-1}.
\end{equation} Recall that $K_{2s}(t_3)$ is one of the
eigenvalues of $H(n,2s)$ (see \autoref{eqn:Krawtchouk}). If $K_{2s}(t_3) \ge 0$,
then \autoref{eqn:ev_from_Z2_1} holds trivially. So we assume $K_{2s}(t_3) < 0$,
and thus $-K_{2s}(t_3) \le |\lambda_{\min}(H(n,2s))|$.

Consider the regime (R1), that is, $s \ge 2$ is fixed and $n$ is large enough. By
\autoref{corollary:lb_Hamming}, $-K_{2s}(t_3) \le |\lambda_{\min}(H(n,2s))| = 
O(n^s)$ while $\frac{\binom{n}{2s}}{n-1} = \Theta(n^{2s-1})$. So
\autoref{eqn:ev_from_Z2_1} holds in (R1).

Consider (R2). We have $\frac{\binom{n}{2s}}{n-1} = \Theta\qty(
\frac{2^{nh(\alpha)}}{n\sqrt{n}})$, where $\alpha = \frac{2s}{n}$. Similarly,
by \autoref{corollary:lb_Hamming},
$$
-K_{2s}(t_3) \le |\lambda_{\min}(H(n,2s))| = 
O\qty(\left(e(1+\sqrt{2})\sqrt{(1-\alpha)/\alpha}\right)^j).
$$
So we have 
$$
-K_{2s}(t_3) \le |\lambda_{\min}(H(n,2s))| < \frac{\binom{n}{2s}}{n-1},
$$
when \autoref{eqn:region_alpha} holds (in particular, when $0 < \alpha \le
0.17$) and $n$ is large enough.
\end{proof}

Note that in the proof of \autoref{lemma:ev_from_Z2}, when $r$ is odd, we need
to change Eq. \eqref{eqn:ev_from_Z2_1} to $K_{2s}(t_3) <
\frac{\binom{n}{2s}}{n-1}.$ However, we cannot upper bound $K_{2s}(t_3)$ by
$|\lambda_{\min}(H(n,2s))|$ as $K_{2s}(t_3)$ might be positive and larger than
$|\lambda_{\min}(H(n,2s))|$. 
\begin{lemma}
	\label{lemma:abs_ub_4}
Let $n = 2(r+s)$ and $(t_0,t_1,t_2,t_3)$ be a non-negative partition of $n$.
Suppose $t_0+t_2 \neq 0$, $t_1+t_3 \neq 0$ and $n \ge 10$. Then
$$
\abs{\lambda(t_0,t_1,t_2,t_3)} \le
\frac{\binom{n}{r,s,r,s}}{n-1} = \abs{\lambda(0,1,n-2,1)}.
$$
\end{lemma}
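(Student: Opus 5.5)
The plan is to combine an exact second-moment (trace) identity, which settles every type of large multiplicity, with a direct coefficient computation for the few types of small multiplicity.

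\textbf{Step 1 (trace bound).} Let $A$ be the adjacency matrix of $G(r,s)$ and $m(\mathbf{t})=\binom{n}{t_0,t_1,t_2,t_3}$. By \autoref{theorem:eigenvalue_4}, $\lambda(\mathbf{v})$ depends only on $\mathbf{t}(\mathbf{v})$, so $\lambda(\mathbf{t})$ occurs with multiplicity at least $m(\mathbf{t})$. Hence
\[
m(\mathbf{t})\,\lambda(\mathbf{t})^2\le \sum_{\mathbf{v}}\lambda(\mathbf{v})^2=\operatorname{tr}(A^2)=4^n\binom{n}{r,s,r,s},
\]
and therefore
\[
\abs{\lambda(\mathbf{t})}\le \binom{n}{r,s,r,s}\sqrt{\frac{4^n}{m(\mathbf{t})\binom{n}{r,s,r,s}}}.
\]
The claim $\abs{\lambda(\mathbf{t})}\le \binom{n}{r,s,r,s}/(n-1)$ thus follows whenever
\[
m(\mathbf{t})\ \ge\ \frac{4^n (n-1)^2}{\binom{n}{r,s,r,s}}.
\]
Here $\binom{n}{r,s,r,s}\ge 4^n/\mathrm{poly}(n)$ when $r,s$ are balanced, but it can be much smaller when $s$ is small. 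So I would instead compare against the worst case and use a cruder statement: the bound is fine unless $\mathbf{t}$ has three of its four entries very small. To make this uniform in $(r,s)$, I would rather rerun the trace argument on the subgroup of eigenvalues sharing the same $(t_0+t_2,t_1+t_3)$, via the factorization used in the proof of \autoref{theorem:eigenvalue_4}. If that is too lossy, I would fall back on the explicit generating polynomial.

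\textbf{Step 2 (direct route for all types).} By \autoref{theorem:eigenvalue_4},
\[
\lambda(\mathbf{t})/\binom{n}{r,s,r,s}=P[\mathbf{t}]/m(\mathbf{t}),\qquad P=\bigl((x+z)^2-(y+w)^2\bigr)^r\bigl((x-z)^2+(y-w)^2\bigr)^s .
\]
Substituting $X=x+z$, $Z=x-z$, $Y=y+w$, $W=y-w$ gives $P=(X^2-Y^2)^r(Z^2+W^2)^s$. The coefficient extraction in the original variables is a tensor product of two binary Krawtchouk transforms, one on the pair $(x,z)$ and one on the pair $(y,w)$. So $P[\mathbf{t}]/m(\mathbf{t})$ becomes a normalized sum of products
\[
\frac{K_{a}(t_2)}{\binom{t_0+t_2}{a}}\cdot\frac{K_{b}(t_3)}{\binom{t_1+t_3}{b}},
\]
with the Krawtchouk polynomials taken at lengths $t_0+t_2$ and $t_1+t_3$, both positive by hypothesis. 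These are weighted by the coefficients of $(X^2-Y^2)^r(Z^2+W^2)^s$, whose absolute values sum (after normalization) to at most $1$. The bound $\abs{K_a(u)}/\binom{N}{a}\le 1$ holds always, and it improves to $\le\abs{1-2u/N}$-type decay for $0<a<N$ (the case $a=1$ gives exactly $\abs{N-2u}/N$). From this I would show that the total is at most $1/(n-1)$, with the extremal case $t_0+t_2=n-1$ or $n-2$, $t_1+t_3\in\{1,2\}$, matching \autoref{lemma:smallest_ev_expr}.

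\textbf{Step 3 (degenerate types and the obstacle).} The types with $\min(t_0+t_2,t_1+t_3)\le 2$ are handled by explicit expansion, as in \autoref{lemma:smallest_ev_expr}: with only one or two $y,w$ letters, at most one factor may contribute a $Y$/$W$ term. The coefficient is then an explicit short sum, which I would compare directly to $m(\mathbf{t})/(n-1)$; the hypothesis $n\ge 10$ is used only to clear small-$n$ exceptions in these finite checks. The main obstacle I expect is Step 2 in the intermediate range, where both $t_0+t_2$ and $t_1+t_3$ are moderate. There the Krawtchouk ratios must be bounded sharply enough, uniformly in $r,s$, to gain a full factor $1/(n-1)$ rather than a constant. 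I would first try the elementary estimate $\abs{K_a(u)}\binom{N}{u}\le\binom{N}{a}\binom{N}{u}\cdot\min(1,\ldots)$ via the orthogonality $\sum_u \binom{N}{u}K_a(u)^2=2^N\binom{N}{a}$, which is the same trace idea from Step 1, applied separately to each binary factor.
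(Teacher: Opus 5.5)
There is a genuine gap: the proposal never proves the inequality. The decisive estimate is deferred (``From this I would show that the total is at most $1/(n-1)$''), and the mechanism you propose for it cannot supply that factor.

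Step 1 cannot handle low-multiplicity types at all. The extremal type $(0,1,n-2,1)$ has multiplicity only $n(n-1)$, while the threshold $4^n(n-1)^2/\binom{n}{r,s,r,s}$ is of order at least $n^{7/2}$.

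In Step 2 you bound the total normalized weight by $1$ and hope to gain $1/(n-1)$ from decay of the ratios $K_a(u)/\binom{N}{a}$. That cannot work, for two reasons:
\begin{itemize}
\item The asserted ``$\abs{1-2u/N}$-type'' decay is false for even $a$. For example, $K_2(N/2)/\binom{N}{2}=-1/(N-1)\neq 0$.
\item Some terms have ratio of modulus exactly $1$. When $t_0+t_2=2s$, the monomial $Y^{2r}Z^{2s}$ contributes with $a=t_0+t_2$ and $b=0$, and $\abs{K_{t_0+t_2}(t_2)}=1=K_0(t_3)$.
\end{itemize}
So once the weights are replaced by the crude bound $1$, no Krawtchouk estimate can recover $1/(n-1)$.

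The missing idea is that the factor comes entirely from the weights, which you can compute exactly.
\begin{itemize}
\item In $(X^2-Y^2)^r(Z^2+W^2)^s$, the monomial $X^{2i}Y^{2r-2i}Z^{2k}W^{2s-2k}$ has coefficient $(-1)^{r-i}\binom{r}{i}\binom{s}{k}$, and distinct $(i,k)$ give distinct monomials.
\item Only monomials with $2i+2k=t_0+t_2$ contribute to $P[\mathbf{t}]$, and $P[\mathbf{t}]=0$ if $t_0+t_2$ is odd.
\item By Vandermonde, the absolute values of the normalized weights therefore sum to exactly $\binom{n/2}{m}/\binom{n}{2m}$, where $2m=t_0+t_2\in[2,n-2]$.
\end{itemize}
Combined with the trivial bound $\abs{K_a(u)}\le\binom{N}{a}$ that you already state, this gives $\abs{\lambda(\mathbf{t})}\le\binom{n}{r,s,r,s}\binom{n/2}{m}/\binom{n}{2m}$. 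That is precisely the paper's bound, which the paper obtains by majorizing $P$ coefficientwise by $((x+z)^2+(y+w)^2)^{n/2}$.

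It then remains to show $\binom{n}{2m}\ge(n-1)\binom{n/2}{m}$.
\begin{itemize}
\item For $m\in\{1,n/2-1\}$ this holds with equality.
\item For $2\le m\le n/2-2$, use $\binom{n}{2m}\ge\binom{n/2}{m}^2$ (Vandermonde) together with $\binom{n/2}{m}\ge\binom{n/2}{2}\ge n-1$. The last inequality is where $n\ge 10$ enters, not in finite checks of degenerate types.
\end{itemize}
With this, neither the trace argument nor the split into degenerate and intermediate ranges is needed.
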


\begin{proof}
By \autoref{theorem:eigenvalue_4},
$$
\abs{\lambda(t_0,t_1,t_2,t_3)} \le
\frac{\binom{n}{r,s,r,s}}{\binom{n}{t_0,t_1,t_2,t_3}}((x+z)^2 + (y+w)^2)^{r+s}[t_0,t_1,t_2,t_3].
$$
We expand the polynomial
\begin{align}
((x+z)^2+(y+w)^2)^{n/2} &= \sum_{k=0}^{n/2} \binom{n/2}{k} (x+z)^{2k} (y+w)^{n-2k} \\
&= \sum_{k=0}^{n/2} \binom{n/2}{k} \qty(
	\sum_{l=0}^{2k} \binom{2k}{l} x^{2k-l}{z^l}
)
\qty(
	\sum_{m=0}^{n-2k} \binom{n-2k}{m} y^{n-2k-m}w^m
) \\
&= \sum_{k=0}^{n/2}\sum_{l=0}^{2k}\sum_{m=0}^{n-2k}
\binom{n/2}{k}\binom{2k}{l}\binom{n-2k}{m} x^{2k-l}z^l y^{n-2k-m}w^m.
\label{eqn:expansion_4}
\end{align}Let
$$
f(t_0,t_1,t_2,t_3) \defeq ((x+z)^2+(y+w)^2)^{n/2}[t_0,t_1,t_2,t_3].
$$
Clearly, if $t_0+t_2$ is odd or $t_1+t_3$ is odd, we have $f(t_0,t_1,t_2,t_3) = 0$. So, we assume that
both $t_0+t_2$ and $t_1+t_3$ are even. From \autoref{eqn:expansion_4}, we have
$$
f(t_0,t_1,t_2,t_3) = \binom{n/2}{(t_0+t_2)/2}\binom{t_0+t_2}{t_0}\binom{t_1+t_3}{t_1}.
$$
Then $$\abs{\lambda(t_0,t_1,t_2,t_3)}
\le \frac{\binom{n}{r,s,r,s}}{\binom{n}{t_0,t_1,t_2,t_3}}f(t_0,t_1,t_2,t_3) = \frac{\binom{n}{r,s,r,s}\binom{n/2}{(t_0+t_2)/2} }{\binom{n}{t_0+t_2}}.$$

To show $\abs{\lambda(t_0,t_1,t_2,t_3)} \le
\frac{\binom{n}{r,s,r,s}}{n-1} = \abs{\lambda(0,1,n-2,1)}$, it suffices to show that
$$
\binom{n}{t_0+t_2}  \ge (n-1)\binom{n/2}{(t_0+t_2)/2}.
$$
As $(t_0+t_2) \notin \setnd{0,n}$ and $t_0+t_2$ is even, we have $(t_0+t_2) \in [2,n-2]$.
It suffices to show that
\begin{equation}
\label{eqn:goal_4}
\forall~ m \in [1,n/2-1], \, \binom{n}{2m} \ge (n-1) \binom{n/2}{m}.
\end{equation}
We note the following inequality
$$
\binom{n}{2m} = \sum_{j=0}^{2m} \binom{n/2}{j} \binom{n/2}{2m-j} \ge
\binom{n/2}{m}\binom{n/2}{m}.
$$
As $\binom{n/2}{m} \ge \binom{n/2}{2}$ for all $m \in [2,n/2-2]$, and
$\binom{n/2}{2} \ge n-1$ for all even $n \ge 10$, we have
$$
\binom{n}{2m} \ge \binom{n/2}{m}\binom{n/2}{m} \ge \binom{n/2}{2}\binom{n/2}{m}
\ge (n-1)\binom{n/2}{m}.
$$
for all even $n \ge 10$ and $m \in [2,n/2-2]$.
For the rest case where $m \in \setnd{1,n/2-1}$, \autoref{eqn:goal_4} holds trivially.
\end{proof}

\section{Quantum Chromatic Numbers}
\label{sec:qchi}
We first introduce some related works on quantum chromatic numbers.

\subsection{Related Works}
\label{subsec:chiq_related_works}
The precise
definition of quantum coloring and the quantum chromatic number $\chi_q(G)$ of a
graph $G$, can be found in \autoref{sec:introduction}.

For the lower bound of $\chi_q(G)$, we have the following spectral lower bound in
\cite{SpectralLB} 
\begin{equation}
	\label{eqn:spectral_lb}
\chi_q(G) \ge 1 - \frac{\lambda_{\max}(G)}{\lambda_{\min}(G)}.
\end{equation}
By this bound and \autoref{eqn:Brouwer} we have (also see
\cite[Theorem 1.3]{SDU}),

\begin{equation}
	\label{eqn:lower_bound_chi_q_Hamming}
\chi_q(H(n,j)) \ge
\begin{cases}
\frac{2j}{2j-n}, & \text{ even } j > n/2, \\
n, &  j = n/2 \text{ and $4\mid n$}.
\end{cases}
\end{equation}  
However, there is no lower bound of $\chi_q(H(n,j))$ for even $j<n/2$. When $j$
is odd, we have $\chi_q(H(n,j)) = 2$ \cite{OnTheQuantumChromaticNumber}.

The upper bounds of $\chi_q(G)$ can be derived from flat orthogonal
representations of $G$ \cite{Oddities}. Orthogonality graphs (also known as
Hadamard graphs) \cite{ExactHadamardGraphs} constitute a family of graphs
naturally equipped with a flat orthogonal representation. Let $O_{n,p}$ be the
\emph{orthogonality graph}, whose vertices are vectors in $\CC^n$ with entries
consisting of only the $p$-th roots of unity, and two vertices are adjacent in
$O_{n,p}$ if and only if they are orthogonal. This representation of vertices
yields $\chi_q(O_{n,p}) \le n$  \cite{Oddities}. There is a natural isomorphism
between $O_{n,p}$ and the Cayley graphs $\cay(\ZZ_p^n,(l,l,\dots,l))$ when
$n=lp$ and $p$ is a prime. So, this determines $\chi_q(O_{n,2}) =
\chi_q(H(n,n/2)) = n$ for $n$ divided by $4$. Recently, Cao \emph{et al.}
\cite{SDU} extended this result to $\chi_q(O_{lp,p}) = lp$ for any integer $p
\ge 2$ and $l$ large enough such that $l(p-1)$ is even.  All these exact values
are obtained by the upper bound from orthogonal representations and the spectral
lower bound.

The upper bounds of $\chi_q(G)$ can also be derived from  graph homomorphisms
\cite{OnTheQuantumChromaticNumber}. If there is a graph homomorphism $G \to H$,
then
\cite{OnTheQuantumChromaticNumber}
\begin{equation}\label{eqn:subg}
  \chi_q(G) \le \chi_q(H).
\end{equation}
For example, when $j > n/2$, there is a graph homomorphism
$H(n,j) \to H(2j,j)$ whose underlying map on the vertices is given by
$$
\ZZ_2^n \to \ZZ_2^{2j}, \quad (v_1,\dots,v_n) \mapsto (v_1,\dots,v_n,0,\dots,0).
$$
Thus, for even $j > n/2$, we have (also see
\autoref{eqn:upper_bound_chi_q_Hamming} and \cite[Theorem 1.1]{SDU})
$$
\chi_q(H(n,j)) \le \chi_q(H(2j,j)) = 2j.
$$
For $p = 4$ and even $n$, note that
$$
O_{n,4} \cong \bigcup_{r=0}^{n/2} G(r,n/2-r).
$$
In particular, $G(r,s)$ is a subgraph of $O_{2(r+s),4}$. So,
\begin{equation}
	\label{eqn:chiq_ub_4}
\chi_q(G(r,s)) \le \chi_q(O_{2(r+s),4}) \le 2(r+s).
\end{equation}
There are some other works focusing on the quantum chromatic number of
subgraphs of $O_{n,p}$ \cite{Feng,OUR} (also see \autoref{tab:results}).

\subsection{Lower Bounds on Quantum Chromatic Numbers}
In this subsection, we apply the spectral lower bound (\autoref{eqn:spectral_lb})
together with the results on the smallest eigenvalue in the previous sections 
to derive lower bounds on $\chi_q(H(n,j))$ and $\chi_q(G(r,s))$.
\begin{corollary}
\label{corollary:lb_chiq_Hamming}
Let $j$ be even and let $n$ be large enough.
$$
\chi_q(H(n,j)) = \begin{cases}
	\Omega(n^{j/2}), & j = O(1), \\
	\Omega\qty(\qty(
		\frac{2^{h(\alpha)}}{
			\left(e(1+\sqrt{2})\sqrt{(1-\alpha)/\alpha}\right)^\alpha
		}
	)^n), & j = \alpha n,
\end{cases}
$$
where $0 < \alpha \le 0.17$.
\end{corollary}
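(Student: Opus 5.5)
The plan is to plug \autoref{corollary:lb_Hamming} into the spectral lower bound \autoref{eqn:spectral_lb}. Since $H(n,j)$ is a Cayley graph, it is $\binom{n}{j}$-regular, and its largest eigenvalue is $\lambda_{\max}(H(n,j))=K_j(0)=\binom{n}{j}$. For even $j$ the smallest eigenvalue is negative: the eigenvalues sum to the trace $0$, and the graph has at least one edge. Hence \autoref{eqn:spectral_lb} gives
\[
\chi_q(H(n,j)) \ge 1+\frac{\binom{n}{j}}{\abs{\lambda_{\min}(H(n,j))}} \ge \frac{\binom{n}{j}}{\abs{\lambda_{\min}(H(n,j))}}.
\]
It then remains to lower bound $\binom{n}{j}$ and to upper bound $\abs{\lambda_{\min}(H(n,j))}$ via \autoref{corollary:lb_Hamming}, treating the two regimes separately.

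\emph{Regime $j=O(1)$.} For fixed $j$ we have $\binom{n}{j}=\Theta(n^j)$. \autoref{corollary:lb_Hamming} (equivalently \autoref{theorem:lb_q} with \autoref{eqn:lb_Hamming_overview}) gives $\abs{\lambda_{\min}(H(n,j))}=O(n^{j/2})$. The ratio is therefore $\Omega(n^{j/2})$. Note that $j<n/2$ holds automatically once $n$ is large.

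\emph{Regime $j=\alpha n$ with $0<\alpha\le 0.17$.} Stirling's approximation gives $\binom{n}{j}=\Theta\bigl(2^{nh(\alpha)}/\sqrt{n}\bigr)$. \autoref{corollary:lb_Hamming}, applicable since $\alpha<1/3$, gives
\[
\abs{\lambda_{\min}}=O\Bigl(\bigl(e(1+\sqrt2)\sqrt{(1-\alpha)/\alpha}\bigr)^{\alpha n}\Bigr).
\]
Dividing, $\chi_q(H(n,j))$ is at least a constant times $n^{-1/2}\,\beta^n$, where $\beta=2^{h(\alpha)}/(e(1+\sqrt2)\sqrt{(1-\alpha)/\alpha})^\alpha$. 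By \autoref{eqn:region_alpha}, $\beta>1$ for $\alpha\le0.17$, so the bound is exponential in $n$.

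The main obstacle is bookkeeping for this exponential case. The stated form $\Omega(\beta^n)$ has no polynomial factor, but the clean quotient carries a factor $n^{-1/2}$. To remove it I would return to the sharper \autoref{theorem:lb_q_theta} rather than the $O(\cdot)$ form of \autoref{corollary:lb_Hamming}. That theorem gives $\abs{q_j(x)}\le (C_\alpha n)^j(1+\sqrt2)^{j+1}/(2\sqrt2\, j!)$, and applying Stirling's formula to $j!$ contributes an extra factor $1/\sqrt{2\pi j}$. This factor cancels the $1/\sqrt n$ in $\binom{n}{j}$ up to constants, so the quotient is $\Omega(\beta^n)$ as claimed.

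If that cancellation did not work out, the fallback is to read the statement as exponential growth $\beta^{n(1-o(1))}$, that is, to absorb the polynomial factor. Everything else is a direct substitution.
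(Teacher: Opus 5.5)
Your proposal is correct and takes the same route as the paper, which simply combines \autoref{corollary:lb_Hamming} with the spectral bound \autoref{eqn:spectral_lb} and $\lambda_{\max}(H(n,j))=\binom{n}{j}$. Your extra bookkeeping is a real gain in rigor: the $O(\cdot)$ form of \autoref{corollary:lb_Hamming}, taken literally, only yields $\Omega(n^{-1/2}\beta^n)$, and your return to \autoref{theorem:lb_q_theta}, where Stirling's formula for $j!$ supplies the factor $1/\sqrt{2\pi j}$ that cancels the $1/\sqrt{n}$ in $\binom{n}{j}$, is exactly what the paper's one-line proof leaves implicit.
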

\begin{proof}
The result follows from \autoref{corollary:lb_Hamming} and
\autoref{eqn:spectral_lb} by noting that $\lambda_{\max}(H(n,j)) = \binom{n}{j}$.
\end{proof}

Note that when $j = \alpha n$, we have an upper bound of $\chi_q(H(n,j))$ in
\autoref{eqn:upper_bound_chi_q_Hamming}, 
$$\chi_q(H(n,j))\leq  2^{h\qty(\frac{1}{2}-\sqrt{\alpha(1-\alpha)})n + o(n)},~ 0
< \alpha < 1/2.$$ We compare the lower bound in
\autoref{corollary:lb_chiq_Hamming} with this upper bound. Let 
\begin{equation}
	\label{eqn:lu}
l(\alpha) \defeq \frac{2^{h(\alpha)}}{\left(e(1+\sqrt{2})\sqrt{(1-\alpha)/\alpha}\right)^\alpha}, 
\qand u(\alpha) \defeq 2^{h\left(1/2-\sqrt{\alpha(1-\alpha)}\right)}
\end{equation}
be the  bases of the  two exponential lower(upper) bounds. Some numerical values
of $l(\alpha)$ and $u(\alpha)$ for $\alpha\leq 0.17$ are listed in
\autoref{tab:compare} for comparison. We note that when $\alpha$ is approaching
$0.17$, the gap between $l(\alpha)$ and $u(\alpha)$  becomes smaller.

\begin{corollary}
	\label{corollary:chi4}
Let $n = 2(r+s)$ with $r \ge s$ and let $r$ be even. Then
$$
\chi_q(G(r,s)) = n,
$$
in both of the following two regimes
\begin{enumerate}
	\item[(R1).] $s \ge 2$ is fixed and $n$ is large enough, i.e. $s = O(1)$;
	\item[(R2).] $s = \alpha \cdot \frac{n}{2}$ and $n$ is large enough, 
	where $0 < \alpha \le 0.17$.
\end{enumerate}
\end{corollary}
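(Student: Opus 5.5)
The statement follows by squeezing $\chi_q(G(r,s))$ between two bounds the paper already has. The upper bound is \autoref{eqn:chiq_ub_4}: $G(r,s)$ is a subgraph of $O_{2(r+s),4}$, so $\chi_q(G(r,s))\le 2(r+s)=n$. The lower bound comes from the spectral bound \autoref{eqn:spectral_lb} combined with the exact smallest eigenvalue in \autoref{theorem:smallest_eigenvalue_4}.

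For the lower bound I first need $\lambda_{\max}(G(r,s))$. Since $G(r,s)$ is the Cayley graph generated by $S=\ZZ_4(r,s,r,s)$, it is $|S|$-regular. Its largest eigenvalue is therefore the degree, $\binom{n}{r,s,r,s}$; this is also the value $\lambda(\mathbf{0})$ noted in \autoref{subsec:smallest_ev4}.

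In either regime (R1) or (R2), with $r\ge s$ and $r$ even, \autoref{theorem:smallest_eigenvalue_4} gives
\[
\lambda_{\min}(G(r,s))=-\frac{\binom{n}{r,s,r,s}}{n-1}.
\]
Substituting into \autoref{eqn:spectral_lb} yields
\[
\chi_q(G(r,s))\ \ge\ 1-\frac{\binom{n}{r,s,r,s}}{-\binom{n}{r,s,r,s}/(n-1)}=1+(n-1)=n.
\]
Together with the upper bound, $\chi_q(G(r,s))=n$.

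There is no real obstacle, since all the work sits in \autoref{theorem:smallest_eigenvalue_4}. The only point to check is that the hypotheses match exactly. The corollary inherits the same regimes, the conditions $r\ge s$ and $r$ even, and "$n$ large enough". That last condition also covers the $n\ge 10$ requirement of \autoref{lemma:abs_ub_4}, which \autoref{theorem:smallest_eigenvalue_4} relies on.
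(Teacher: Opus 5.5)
Your proposal is correct and matches the paper's proof: the upper bound $\chi_q(G(r,s))\le n$ comes from \autoref{eqn:chiq_ub_4}, and the lower bound comes from \autoref{eqn:spectral_lb} with $\lambda_{\max}(G(r,s))=\binom{n}{r,s,r,s}$ and $\lambda_{\min}(G(r,s))=-\binom{n}{r,s,r,s}/(n-1)$ from \autoref{theorem:smallest_eigenvalue_4}. One minor aside: the value the paper records in \autoref{subsec:smallest_ev4} is $\lambda(0,0,0,n)$, the eigenvalue of the all-$3$ vector, which equals $\lambda(\mathbf{0})$ only because $r$ is even; your regularity argument for $\lambda_{\max}$ does not depend on this.
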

\begin{proof}
By \autoref{eqn:spectral_lb} and
\autoref{theorem:smallest_eigenvalue_4}, $\chi_q(G(r,s)) \ge
n$, in both (R1) and (R2). On the other hand, it is known in
\autoref{eqn:chiq_ub_4}, that $\chi_q(G(r,s)) \le n$.
\end{proof}

\begin{table}
	\centering
\begin{threeparttable}
	\caption{Comparison between the lower and upper bounds on $\chi_q(H(n,\alpha n))$}
	\label{tab:compare}
\begin{tabular}{l|lllllllll}
\toprule
$\alpha$    &$0.01$ & $0.02$ & $0.03$ & $0.04$ & $0.05$ & $0.06$ & $0.07$
& $0.08$ & $0.09$ \\
$l(\alpha)$ &$1.062$ & $1.105$ & $1.140$ & $1.171$ & $1.198$ & $1.222$ & $1.243$
& $1.262$ & $1.279$ \\
$u(\alpha)$ &$1.961$ & $1.922$ & $1.885$ & $1.848$ & $1.813$ & $1.778$ & $1.745$
& $1.712$ & $1.680$ \\
\midrule
$\alpha$ & $0.10$ & $0.11$ & $0.12$ & $0.13$ & $0.14$ & $0.15$ & $0.16$ & $0.17$
\\
$l(\alpha)$ & $1.293$ & $1.307$ & $1.318$ & $1.328$ & $1.336$ & $1.343$ &
$1.349$ & $1.353$ \\
$u(\alpha)$ & $1.649$ & $1.619$ & $1.590$ & $1.562$ & $1.534$ & $1.507$ &
$1.481$ & $1.456$ \\
\bottomrule
\end{tabular}
\begin{tablenotes}
\item Here, $\Omega(l(\alpha)^n) \leq \chi_q(H(n,\alpha n)) \le
u(\alpha)^{n+o(n)}$ when $n$ is large enough.
\end{tablenotes}
\end{threeparttable}
\end{table}

\section{Conclusion}
\label{sec:conclusion}
We provide asymptotic lower bounds for the smallest eigenvalue and the quantum
chromatic number of the distance-$j$ Hamming graph $H(n,j)$ for even $j < n/2$.
In the regime $j = O(1)$, we have $|\lambda_{\min}(H(n,j))| = O(n^{j/2})$, which
implies that $\chi_q(H(n,j)) = \Omega(n^{j/2})$. The regime $j = \Theta(n)$ is
also considered, and exponential lower bounds on $\lambda_{\min}(H(n,j))$ and
$\chi_q(H(n,j))$ are derived. By these lower bounds on the smallest eigenvalue
of $H(n,j)$, we also showed that $\lambda_{\min}(\cay(\ZZ_4^n,(r,s,r,s))) = -
\frac{\binom{n}{r,s,r,s}}{n-1}$ and $\chi_q(\cay(\ZZ_4^n,(r,s,r,s))) = n$ with
$n = 2(r+s)$, for fixed $s\geq 2$ or $s=\frac{1}{2}\alpha n$ with $0 < \alpha
\le 0.17$, where $r \ge s$ is even and large enough.

There are many related problems left. The following ones might be interesting
for future study.
\begin{enumerate}[label=(\arabic*)]
\item Find an upper bound for $\chi_q(H(n,j))$ in the regime $j = O(1)$. The case
$j = 2$ has been considered in \cite{OUR} (also see \autoref{tab:results}).
\item The approximation in \autoref{theorem:lb_q_theta} is rough. Can this be refined
to provide better lower bounds on $\lambda_{\min}(H(n,j))$?
\item There is still a gap between the lower bound of $\chi_q(H(n,j))$ in this paper
and the upper bound in \autoref{eqn:upper_bound_chi_q_Hamming}, in the regime
$j = \Theta(n)$. A solution to Problem (2) might narrow this gap.
\item We believe that \autoref{theorem:smallest_eigenvalue_4} holds for all
$n=2(r+s)$ with even $r \ge s$, not just the regimes considered in this
paper. If this is true, it will imply that $\chi_q(G(r,s)) = n$ with $n = 2(r+s)$ for all
even $r \ge s$.
\item Finally, what is the smallest eigenvalue of the graph $H(n,j)$ for even $j < n/2$?
\end{enumerate}

\section*{Acknowledgements}
Yu Ning and Xiande Zhang are supported in part by the National Key Research and 
Development Programs of China under Grant 2023YFA1010200,  in part by NSFC 
under Grant 12171452 and Grant 12231014, and in part by the Quantum Science and 
Technology-National Science and Technology Major Project under Grant 
2021ZD0302902.
Jack H. Koolen is partially supported by NSFC under Grant 12471335.
The authors thank Tao Luo for helpful discussions.

\appendix
\section{Fibonacci Polynomials}
\label{appsec:Fibonacci}
We collect some results on the Fibonacci polynomial
\cite{FibonacciPolynomialWebsite}, which are used in the proof of
\autoref{theorem:lb_q_theta}. The Fibonacci polynomial $F_n(x)$ is defined by
the recursion $F_{n+1}(x) = xF_n(x) + F_{n-1}(x)$ with initial values $F_1(x) =
1$ and $F_2(x) = x$. It is known that
\cite{FibonacciPolynomialWebsite}
\begin{equation}
	\label{eqn:Fibonacci}
F_n(x) = \sum_{k=0}^{\floor{(n-1)/2}} \binom{n-1-k}{k}x^{n-1-2k} =
\frac{\qty(x+\sqrt{x^2+4})^n - \qty(x-\sqrt{x^2+4})^n}
{2^n\sqrt{x^2+4}}.
\end{equation}
The following related polynomial is directly used in the proof of
\autoref{theorem:lb_q_theta},
\begin{equation}
f_n(x) \defeq \sum_{k=0}^{\floor{n/2}} \binom{n-k}{k}x^k.
\end{equation}
By \autoref{eqn:Fibonacci}, $f_n(x) = \sqrt{x}^n F_{n+1}(1/\sqrt{x})$,
and we have
\begin{equation}
	\label{eqn:f2}
f_n(x) = \frac{(1+\sqrt{1+4x})^{n+1} - (1-\sqrt{1+4x})^{n+1}}
{2^{n+1}\sqrt{1+4x}}.
\end{equation}

\bibliographystyle{plain}
\bibliography{bib.bib}
\end{document}